\newcommand{\CC}{\mathbb{C}}
\newcommand{\QC}{\mathcal{Q}}
\newcommand{\GC}{\mathcal{G}}
\newcommand{\HC}{\mathcal{H}}
\newcommand{\F}{\mathbb{F}}
\newcommand{\p}{\pi}
\newcommand{\e}{\epsilon}
\newcommand{\ran}{{\boldsymbol{\rho}}^{1}_{n}}
\newcommand{\rbn}{{\boldsymbol{\rho}}^{2}_{n}}
\newcommand{\bve}{{\boldsymbol{\varepsilon}}}
\newcommand{\al}{\alpha}
\newcommand{\gam}{\gamma}
\newcommand{\omn}{{\boldsymbol{\omega}}_{n}}
\newcommand{\omk}{{\boldsymbol{\omega}}_{k}}
\newcommand{\oml}{{\boldsymbol{\omega}}_{l}}
\newcommand{\bj}{\boldsymbol{{\it j}}}
\newcommand{\bja}{\boldsymbol{{\it j}}_{1}}
\newcommand{\bjb}{\boldsymbol{{\it j}}_{2}}
\newcommand{\bl}{\boldsymbol{\lambda}}
\newcommand{\bln}{\bl_{n}}
\newcommand{\blk}{\bl_{k}}
\newcommand{\bll}{\bl_{l}}
\newcommand{\bL}{\boldsymbol{\Lambda}}
\newcommand{\bLn}{\bL_{n}}
\newcommand{\bd}{\boldsymbol{\delta}}
\newcommand{\bda}{\boldsymbol{\delta}_{1}}
\newcommand{\bdb}{\boldsymbol{\delta}_{2}}
\newcommand{\bal}{{\boldsymbol{\nu}}_{1}}
\newcommand{\bbe}{{\boldsymbol{\nu}}_{2}}
\newcommand{\bmu}{\boldsymbol{\mu}}
\newcommand{\taun}{{\boldsymbol{\tau}}_{n}}
\newcommand{\taui}{{\boldsymbol{\tau}}^{i}_{n}}
\newcommand{\DA}{D_{\alpha}}
\newcommand{\Irr}{\mathrm{Irr}}
\newcommand{\Ind}{\mathrm{Ind}}
\newcommand{\Ker}{\mathrm{Ker}}
\newlength{\standardunitlength}
\newtheorem{prop}{Proposition}[section]
\newtheorem{definition}[prop]{Definition}
\newtheorem{lemma}[prop]{Lemma}
\newtheorem{cor}[prop]{Corollary}
\newtheorem{theorem}[prop]{Theorem}
\begin{document}

\title [Cycle indices for finite orthogonal groups] {Cycle indices for finite orthogonal groups of even characteristic}

\author{Jason Fulman}
\address{Department of Mathematics\\
        University of Southern California\\
        Los Angeles, CA, 90089, USA}
\email{fulman@usc.edu}

\author{Jan Saxl}
\address{University of Cambridge \\ Cambridge CB3 0WA, UK}
\email{J.Saxl@dpmms.cam.ac.uk}

\author{Pham Huu Tiep}
\address{University of Arizona \\ Tucson, AZ, 85721-0089, USA}
\email{tiep@math.arizona.edu}

\keywords{Random matrix, cycle index, Weil representation,
random partition}

\date{Version of April 8, 2010}

\thanks{The authors are grateful to Martin Liebeck for kindly sending
them \cite{LS} which plays an important role in the current
paper.}

\thanks{Fulman was partially supported by NSF grant DMS-0802082 and NSA
grant H98230-08-1-0133. Tiep was partially supported by NSF grant
DMS-0901241.}

\begin{abstract} We develop cycle index generating functions for orthogonal groups in
even characteristic, and give some enumerative applications. A key step is the determination
of the values of the complex linear-Weil characters
of the finite symplectic group, and their inductions to the general linear
group, at unipotent elements.
We also define and study several natural probability measures on integer partitions.
\end{abstract}

\maketitle

\begin{center}
{\sl Dedicated to Peter M. Neumann on the occasion of his
seventieth birthday}
\end{center}

\section{Introduction}

        P\'{o}lya \cite{Po}, in a landmark paper on combinatorics (see \cite{Polya} for an
English translation), introduced the cycle index of the symmetric groups. This can be written as
follows. Let $a_i(\pi)$ be the number of $i$-cycles of $\pi$. The Taylor
expansion of $e^z$ and the fact that there are $n!/\prod_i (a_i! i^{a_i})
$
elements of $S_n$ with $a_i$ $i$-cycles, yield the following theorem.

\begin{theorem} (P\'{o}lya \cite{Po})

\[ 1 + \sum_{n=1}^{\infty} \frac{u^n}{n!} \sum_{\pi \in S_n} \prod_i
x_i^{a_i(\pi)} = \prod_{m=1}^{\infty} e^{\frac{x_mu^m}{m}} \]

\end{theorem}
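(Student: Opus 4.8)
The plan is to prove the identity by partitioning $S_n$ according to cycle type and then reorganizing the resulting multiple sum into a product over cycle lengths. Recall that the cycle type of $\pi \in S_n$ is recorded by the sequence $(a_1(\pi), a_2(\pi), \ldots)$, and that these numbers satisfy $\sum_i i\, a_i(\pi) = n$. Since the monomial $\prod_i x_i^{a_i(\pi)}$ depends only on the cycle type, the inner sum over $S_n$ collapses to a sum over cycle types, each weighted by the number of permutations realizing that type.

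First I would invoke the stated counting fact: the number of $\pi \in S_n$ having $a_i$ cycles of length $i$ for every $i$ equals $n! / \prod_i (a_i!\, i^{a_i})$. Substituting this and writing $u^n = \prod_i u^{i a_i}$ (valid because $\sum_i i a_i = n$), the $n$th term on the left becomes
\[
\frac{u^n}{n!} \sum_{\pi \in S_n} \prod_i x_i^{a_i(\pi)}
= \sum_{\substack{(a_i)_{i\ge 1} \\ \sum_i i a_i = n}} \; \prod_i \frac{1}{a_i!}\left(\frac{x_i u^i}{i}\right)^{a_i}.
\]

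Next I would sum over all $n \ge 0$, noting that the $n=0$ term contributes the leading $1$ (the empty product attached to the identity of $S_0$). As $n$ ranges over the nonnegative integers, the pairs $\bigl(n, (a_i)\bigr)$ with $\sum_i i a_i = n$ are in bijection with the set of all finitely supported sequences $(a_1, a_2, \ldots)$ of nonnegative integers. Hence the whole left-hand side equals $\sum_{(a_i)} \prod_i \frac{1}{a_i!}\left(x_i u^i / i\right)^{a_i}$, which factors as a product over $m$ of the single-index sums $\sum_{a_m \ge 0} \frac{1}{a_m!}\left(x_m u^m/m\right)^{a_m}$. Each such factor is exactly the Taylor expansion of $e^{x_m u^m / m}$, delivering the right-hand side.

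The one point requiring care is the interchange of summation and the factorization step. Working with formal power series in $u$ (with coefficients polynomial in the $x_i$) makes this rigorous: each power $u^N$ receives contributions from only finitely many sequences $(a_i)$, namely those with $\sum_i i a_i = N$, so comparing coefficients of $u^N$ on the two sides reduces to a finite identity and no convergence issue arises. I expect this bookkeeping — checking that the product $\prod_m e^{x_m u^m/m}$ reproduces the cycle-type sum in each fixed degree of $u$ — to be the only delicate ingredient, and it follows immediately from the multinomial structure of the exponential.
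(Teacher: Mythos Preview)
Your proof is correct and follows exactly the approach the paper sketches: the paper does not give a full proof, but merely notes that the Taylor expansion of $e^z$ together with the cycle-type count $n!/\prod_i (a_i!\, i^{a_i})$ yield the identity. Your write-up fills in these details faithfully.
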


        The P\'{o}lya cycle index has been a key tool in understanding what a
typical permutation $\pi \in S_n$ ``looks like''. It is useful for studying
properties of a permutation which depend only on its cycle structure. Here
are a few examples of theorems which can be proved using the cycle
index. Shepp and Lloyd \cite{She} showed that for any $i< \infty$, the
joint distribution of $(a_1(\pi),\cdots,a_i(\pi))$ for $\pi$ chosen
uniformly in $S_n$ converges to independent (Poisson($1$), $\cdots$,
Poisson($\frac{1}{i}$)) random variables as $n \rightarrow \infty$. Goncharov \cite{Gon}
proved that the number of cycles in a random permutation is asymptotically
normal with mean and variance $\log (n)$. Goh and Schmutz \cite{Go1} proved that
if $\mu_n$ is the average order of an element of $S_n$, then

\[ \log (\mu_n) = C \sqrt{\frac{n}{\log (n)}} (1+o(1)), \] where $C = 2.99047...$.

Given the above facts, it is very natural to seek cycle indices for finite classical
groups. Kung \cite{Kung} and Stong \cite{St} developed cycle indices for the tower of groups
$GL_{n}(q)$; applications, and extensions to $GU_{n}(q)$ and odd characteristic symplectic and
orthogonal groups appear in \cite{F1}. The paper \cite{W2} independently uses generating
function methods to study various proportions in $GL_{n}(q)$, and the memoir \cite{FNP} extends results
in \cite{F1} and \cite{W2} to other finite classical groups. Britnell \cite{B1}, \cite{B2},
\cite{B3}, \cite{B4} extends cycle index techniques to $SL_{n}(q)$, $SU_{n}(q)$, and odd characteristic
groups related to the finite symplectic and orthogonal groups. The case of even characteristic
symplectic groups was treated in \cite{FG1}, using representation theory.

These cycle indices for finite classical groups are quite useful; they have applications in computational
group theory \cite{NP1}, and were fundamental
to the proof of the Boston-Shalev conjecture that the proportion of derangements in a primitive action of
a simple group on a set $X$ with $|X|>1$ is uniformly bounded away from 0 (see \cite{FG2} and the references
therein). Even quite complicated statistics such as the order of a random matrix can be studied using cycle
index techniques \cite{Sc}; Schmutz's results along these lines were crucially applied by Shalev in \cite{Sh}.

The purpose of this paper is to obtain results for one important remaining case: even characteristic orthogonal groups.
Throughout this paper $O^{\pm}(n,q)$ denotes the full orthogonal group (not the conformal group), though some
authors use the notation $GO^{\pm}(n,q)$.
Since odd dimensional orthogonal groups are isomorphic to symplectic groups (and
one can easily move between the corresponding rational canonical forms in $GL_{2n}(q)$ and
$GL_{2n+1}(q)$, (see Lemma \ref{sp-O}), we assume that the dimension is even.
In principle the cycle indices could be obtained by adding conjugacy classes sizes of $O^{\pm}_{2n}(q)$ with a given $GL$ rational canonical form, and using formulas of Wall \cite{W1}. However this seems quite a daunting task, and Wall's treatment of conjugacy classes in even
characteristic finite orthogonal groups is so complicated that experts (Liebeck and Seitz) have initiated a program of revisiting Wall's work
(see for instance \cite{LS}) and Lusztig (\cite{L1}, \cite{L2}, \cite{L3})
has three recent papers on the topic. As another example of the complexity of the characteristic two case, see Andrews' proof \cite{A2} of the
Lusztig-Macdonald-Wall conjectures on enumerating conjugacy classes in $O^{\pm}(2n,q)$. Our strategy for studying characteristic two cycle indices employs representation theory, and a crucial step is the derivation of a formula for the complex linear-Weil characters
of $Sp_{2n}(q)$ on unipotent elements.

This paper is organized as follows. Section \ref{characters} performs the needed character theory calculations. This includes
several intermediate results such as a branching formula and parameterizations of unipotent classes which may be of
independent interest. Section \ref{odddim} briefly treats odd dimensional orthogonal groups, and
Section \ref{cycleindex} develops the cycle indices for $O^{\pm}(2n,q)$ and $\Omega^{\pm}(2n,q)$. Some enumerative
applications are given in Section \ref{enumerate}. Section \ref{partitions} defines and studies several probability measures on
integer partitions, which we speculate may arise as an orthogonal analog of the Cohen-Lenstra number field/function field heuristics.

\section{Character theory calculations} \label{characters}

\subsection{Some permutation characters} \label{permchar}
To begin we review some representation theory. Let
$S := Sp_{2n}(q)$ be the finite symplectic group stabilizing a non-degenerate
symplectic form $(\cdot ,\cdot)$ on $V = \F_{q}^{2n}$, with $q$ a power of 2.
Then $S$ acts as a permutation group on the set of quadratic forms polarized to
$(\cdot,\cdot)$. There are two orbits, depending on the Witt index (or the type)
of the forms. The two permutation characters, $\p ^{+}$ and $\p ^{-}$, are both
multiplicity-free. This is well known; a proof appears in \cite{nick},
and we sketch it below.
We use the methods of \cite{nick} to obtain a decomposition of these characters
into irreducible characters. We show first that $\p = \p^+ + \p^-$ is equal to the
permutation character $\taun$ of $S$
acting on the set of vectors of $V$.
We then show that
\begin{equation}\label{for-p}
  \p^{+} = 1_{S} + \rbn +  \sum^{(q-2)/2}_{i=1}\taui,~~~
  \p^{-} = 1_{S} + \ran +  \sum^{(q-2)/2}_{i=1}\taui,
\end{equation}
where $1_{S} + \ran + \rbn$ is the permutation character of the well-known
rank $3$ permutation action of $S$ on the set of $1$-subspaces of $V$,
and each of the $\taui$ is an irreducible character of degree $(q^{2n}-1)/(q-1)$.
These characters $\taui$ are restrictions of irreducible characters
of the corresponding general linear group $G = GL_{2n}(q)$ and are the complex linear-Weil
characters, investigated by Guralnick and Tiep \cite{gt}. We will only use
the fact that for $g$ unipotent \begin{equation} \label{val} \taui(g) = \frac{q^{d(g)}-1}{q-1},
\end{equation} for all $i$, where $d(g)$ is the dimension of the kernel of $g-1$.

Let $g \in S$. Then $\p (g) > 0$; the proof of this, due to Inglis,
appears in \cite[Lemma 4.1]{ss}. Let $Q$ be a quadratic form supported by
$(\cdot,\cdot)$, and fixed by $g$. If $R$ is a quadratic form supported
by $(\cdot,\cdot)$, then $Q+R$ is a quadratic form on $V$ which is totally defective
(that is, $Q+R$ is a quadratic form supported by the zero
symplectic form). Any such quadratic form is the square of a unique
linear functional $f_{Q,R}$ on $V$, and $R$ is fixed by $g$ if and only if
$f_{Q,R}$ is fixed by $g$. It follows that $\p$ equals the permutation
character $\taun$ of $S$ on the set of vectors of $V$.

Inglis takes this further: for bilinear forms
$Q$ and $R$ supported by $(\cdot,\cdot)$, let $y_{Q,R}$ be the
unique vector such that $(Q+R)(x) = (x, y_{Q,R})^2$ for all $x \in V,$
and then define $a(Q,R) = Q(y_{Q,R}) = R(y_{Q,R})$.
The pairs $(Q,R)$ and  $(Q_1,R_1)$ lie in the same
orbit of $S$ on ordered pairs of forms if and only if
$a(Q,R) = a(Q_1,R_1)$. From this it follows that
the permutation rank of the action of $S$ on the quadratic
forms of a given type $+$ or $-$ is $(q+2)/2$.
Since $a(Q,R) = a(R,Q)$, it follows
that each orbital in these actions of $S$ is self-paired, whence
the permutation characters $\p^{+}$ and $\p^{-}$ are both multiplicity
free. This last claim can also be seen directly: in dimension two
it is a very easy computation, and for $Sp_{2n}(q)$ it is seen by
restriction to $Sp_2(q^n)$ (note that $Sp_2(q^n)$ is transitive
in our actions of $Sp_{2n}(q)$).

It is shown in \cite[\S3]{gt} that
$$\taun = 2 \cdot 1_{S} + \ran + \rbn + 2\sum^{(q-2)/2}_{i=1}\taui.$$
Now the claimed decomposition (\ref{for-p})
for $\p^+$ and $\p^-$ easily follows
since the characters $\p ^{+}, \p ^{-}$ are multiplicity free,
each with $(q+2)/2$ constituents.

\subsection{Branching rules for linear-Weil characters}
We recall the construction \cite{T} of the dual pair
$Sp_{2n}(q) \times O^{+}_{2}(q)$
in characteristic $2$. Let $U = \F_{q}^{2n}$ be
endowed with standard symplectic form $(\cdot,\cdot)$. We will also
consider the $\F_{2}$-symplectic form
$<u,v> = {\mathrm{tr}}_{\F_{q}/\F_{2}}((u,v))$ on $U$, and let
$$E = \CC^{q^{2n}} = \langle e_{u} \mid u \in U \rangle_{\CC}.$$
Clearly, $S := Sp_{2n}(q)$ acts on $E$ via $g~:~e_{u} \mapsto e_{g(u)}$. Fix
$\delta \in \F_{q}^{\times}$ of order $q-1$, and consider the following
endomorphisms of $E$:
$$\bd~:~e_{u} \mapsto e_{\delta u}$$
(for any $u \in U$), and
$$\bj~:~e_{0} \mapsto e_{0},~~~~e_{v} \mapsto
  \frac{1}{q^{n}+1}\sum_{0 \neq w \in U,~<v,w> = 0}e_{w} -
  \frac{q^{n}+2}{q^{n}(q^{n}+1)}\sum_{w \in U,~<w,v> \neq 0}e_{w}$$
(for any $0 \neq v \in U$). One can check that
$D := \langle \bd,\bj \rangle \simeq O^{+}_{2}(q)$ (a dihedral group
of order $2(q-1)$), and that $D$
centralizes $S$. The subgroup $S \times D$ of $GL(E)$
is the desired dual pair $Sp_{2n}(q) \times O^{+}_{2}(q)$. Let $\omn$
denote the character of $S \times D$ acting on $E$. It is shown in
\cite{T} that $\omn|_{S} = \taun$, the permutation character of $S$ on the
point set of its natural module $V = \F_{q}^{2n}$. Moreover one can label the
irreducible characters of $D$ as $\bbe = 1_{D}$, $\bal$ of degree $1$,
and $\bmu_{i}$, $1 \leq i \leq (q-2)/2$, of degree $2$ such that
%\marginpar{forSD}
\begin{equation}\label{forSD}
  \omn|_{S \times D} = (\ran + 1_{S}) \otimes \bal +
  (\rbn + 1_{S}) \otimes \bbe + \sum^{(q-2)/2}_{i=1}\taui \otimes \bmu_{i}.
\end{equation}

We can repeat the above construction but with $n = k+l$ replaced throughout
by $k > 0$, resp. by $l > 0$, and subscript $1$, resp. $2$, attached to
all letters $U$, $E$, $S$, $D$, $\bd$, and $\bj$. Thus we get the dual pair
$S_{1} \times D_{1} \simeq Sp_{2k}(q) \times O^{+}_{2}(q)$ inside $GL(E_{1})$
with character $\omk$, and the dual pair
$S_{2} \times D_{2} \simeq Sp_{2l}(q) \times O^{+}_{2}(q)$ inside $GL(E_{2})$
with character $\oml$. Now we can identify $U$ with $U_{1} \oplus U_{2}$.
This in turn identifies $E$ with $E_{1} \otimes E_{2}$ and $\bd$ with
$\bda \otimes \bdb$. This identification also embeds $S_{1} \otimes S_{2}$
in $S$. In what follows, we denote $x_{1} := \bda^{a}\bja^{b}$ and
$x_{2} := \bdb^{a}\bjb^{b}$ for $x = \bd^{a}\bj^{b}$.

%\marginpar{mult1}
\begin{lemma}\label{mult1}
{ Let $Sp_{2k}(q) \times Sp_{2l}(q)$ be a standard subgroup of
$Sp_{2n}(q)$. Then
$\omn(gx) = \omk(g_{1}x_{1}) \cdot \oml(g_{2}x_{2})$ for any
$x \in O^{+}_{2}(q)$ and any
$g = g_{1} \otimes g_{2} \in Sp_{2k}(q) \times Sp_{2l}(q)$.}
\end{lemma}

\begin{proof}
Suppose first that $x = \bd^{a}$. Then
$$gx = (g_{1} \otimes g_{2})(\bda \otimes \bdb)^{a} =
  (g_{1} \otimes g_{2})(\bda^{a} \otimes \bdb^{a}) =
  g_{1}\bda^{a} \otimes g_{2}\bdb^{a} =
  g_{1}x_{1} \otimes g_{2}x_{2},$$
whence the statement follows by taking trace.

It remains to consider the case $x = \bd^{a}\bj$. Since all the elements
$\bd^{a}\bj$ for $0 \leq a < q-1$ are conjugate in
$D$, we may assume $a = 0$. Let
$$N := |\{w \in U \mid <w,g(w)> = 0\}|,~~~
  N_{i} :=  |\{w \in U_{i} \mid <w,g_{i}(w)> = 0\}|$$
for $i = 1,2$. One can check that
$$\omn(gx) = 2q^{-n}N -q^{n},~~~\omk(g_{1}x_{1}) = 2q^{-k}N_{1} -q^{k},~~~
  \oml(g_{2}x_{2}) = 2q^{-l}N_{2} -q^{l}.$$
To relate $N$ to $N_{1}$ and $N_{2}$, write $w = u_{1} + u_{2}$ for
$w \in W$ and $u_{i} \in U_{i}$. Then $g_{i}(u_{i}) \in U_{i}$ and so
$$<w,g(w)> = <u_{1}+u_{2},g_{1}(u_{1})+g_{2}(u_{2})> =
  <u_{1},g_{1}(u_{1})> + <u_{2},g_{2}(u_{2})>.$$
It follows that $<w,g(w)> = 0$ if and only if
$$<u_{1},g_{1}(u_{1})> = <u_{2},g_{2}(u_{2})> = 0 \mbox{ or }
  <u_{1},g_{1}(u_{1})> = <u_{2},g_{2}(u_{2})> = 1.$$
Hence $N = N_{1}N_{2}+(q^{2k}-N_{1})(q^{2l}-N_{2})$, and so
\begin{eqnarray*} \omn(gx) & = & 2q^{-n}(N_{1}N_{2}+(q^{2k}-N_{1})(q^{2l}-N_{2}))-q^{n} \\
& = &
 (2q^{-k}N_{1} -q^{k})(2q^{-l}N_{2} -q^{l}), \end{eqnarray*}
as stated.
\end{proof}

A well-known consequence of orthogonality relations (see e.g. Lemma 5.5 of
\cite{LBST}) implies that, for any $g \in S$ and $x \in D$,
\begin{equation}\label{dual}
  \omn(gx) = \sum_{\al \in~\Irr(D)}\al(x) \cdot \DA(g),
\end{equation}
where
$$\DA(g) = \frac{1}{|D|}\sum_{x \in D}\overline{\al(x)}\omn(gx).$$
We will use the decomposition (\ref{dual})
and Lemma \ref{mult1} to prove the following
branching rule for the virtual character $\bln:= \p^+-\p^-= \rbn-\ran$,
see (\ref{for-p}).

%\marginpar{mult2}
\begin{lemma}\label{mult2}
{ Let $Sp_{2k}(q) \times Sp_{2l}(q)$ be a standard subgroup of
$Sp_{2n}(q)$. Then
$\bln(g) = \blk(g_{1}) \cdot \bll(g_{2})$ for any
$g = g_{1} \otimes g_{2} \in Sp_{2k}(q) \times Sp_{2l}(q)$.}
\end{lemma}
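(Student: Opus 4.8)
The plan is to express the virtual character $\bln$ as a single value of the Weil character $\omn$ at a reflection of $D$, and then to feed that value into Lemma \ref{mult1}. The first step is to identify the functions $\DA$ occurring in the dual-pair expansion (\ref{dual}) by matching it against the explicit decomposition (\ref{forSD}). Since the coefficient of an irreducible $\al \in \Irr(D)$ in $\omn|_{S \times D}$ is exactly $D_{\al}$, comparing (\ref{dual}) with (\ref{forSD}) gives $D_{\bbe}(g) = \rbn(g) + 1_{S}(g)$, $D_{\bal}(g) = \ran(g) + 1_{S}(g)$, and $D_{\bmu_{i}}(g) = \taui(g)$. Because $\bln = \rbn - \ran$ by (\ref{for-p}), the two copies of $1_{S}$ cancel upon subtraction, so $\bln = D_{\bbe} - D_{\bal}$.

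Next I would evaluate the identity (\ref{dual}) at the single element $x = \bj$, which is a reflection in the dihedral group $D \simeq O^{+}_{2}(q)$. Here the arithmetic of characteristic two enters: since $q$ is even, $q-1$ is odd, so $D$ has exactly the two linear characters $\bbe = 1_{D}$ and the sign character $\bal$ (equal to $-1$ on every reflection), while each remaining $\bmu_{i}$ is $2$-dimensional and therefore vanishes on the reflection $\bj$. Thus $\bbe(\bj) = 1$, $\bal(\bj) = -1$, and $\bmu_{i}(\bj) = 0$, so (\ref{dual}) collapses to the clean identity $\omn(g\bj) = D_{\bbe}(g) - D_{\bal}(g) = \bln(g)$. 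The very same reasoning applied to the two factors yields $\omk(g_{1}\bja) = \blk(g_{1})$ and $\oml(g_{2}\bjb) = \bll(g_{2})$.

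Finally I would apply Lemma \ref{mult1} with $x = \bj$, so that $x_{1} = \bja$ and $x_{2} = \bjb$, to obtain $\omn(g\bj) = \omk(g_{1}\bja) \cdot \oml(g_{2}\bjb)$; combining this with the three identities of the previous step gives $\bln(g) = \blk(g_{1}) \cdot \bll(g_{2})$, as claimed. The substance of the argument lies entirely in the second step, namely recognizing that the multiplicativity one wants for $\bln$ is exactly the multiplicativity already established for $\omn$ at a reflection. Once that observation is made, the only ingredient needed is enough of the character table of $O^{+}_{2}(q)$ in even characteristic to know that $\bj$ is a reflection on which the $2$-dimensional characters vanish and the sign character takes the value $-1$; everything else is bookkeeping. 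The main obstacle is thus conceptual rather than computational.
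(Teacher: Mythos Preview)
Your argument is correct and is genuinely different from the paper's. The paper expands $\omn$, $\omk$, $\oml$ via (\ref{dual}) and then, using Lemma \ref{mult1}, derives the full branching formulae
\[
D_{\bbe}|_{S_{1}\times S_{2}} = E_{\bal}\otimes F_{\bal} + E_{\bbe}\otimes F_{\bbe} + \sum_{i}E_{\bmu_{i}}\otimes F_{\bmu_{i}},\qquad
D_{\bal}|_{S_{1}\times S_{2}} = E_{\bal}\otimes F_{\bbe} + E_{\bbe}\otimes F_{\bal} + \sum_{i}E_{\bmu_{i}}\otimes F_{\bmu_{i}},
\]
by computing the structure constants $[\beta\gamma,\al]$ in $\Irr(D)$; subtracting then gives the lemma. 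You bypass this branching calculation entirely by evaluating (\ref{dual}) at the single reflection $\bj$: since $q-1$ is odd, the dihedral group $D$ of order $2(q-1)$ has exactly the two linear characters $\bbe$, $\bal$, the two-dimensional $\bmu_{i}$ vanish on every reflection, and $\bal(\bj)=-1$, so $\omn(g\bj)=D_{\bbe}(g)-D_{\bal}(g)=\bln(g)$ immediately. Your route is shorter and isolates precisely the character-theoretic fact that makes the lemma work; the paper's route yields as a byproduct the restrictions of $D_{\bal}$ and $D_{\bbe}$ separately, which is extra information not needed here.
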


\begin{proof}
We will use the notation introduced before Lemma \ref{mult1}. Applying
(\ref{dual}) to the dual pairs $S_{1} \times D$ and $S_{2} \times D$, we can
also write
$$\omk(g_{1}x_{1}) = \sum_{\al \in~\Irr(D)}\al(x_{1}) \cdot E_{\al}(g_{1}),~~~
  \oml(g_{2}x_{2}) = \sum_{\al \in~\Irr(D)}\al(x_{2}) \cdot F_{\al}(g_{2}),$$
where $E_{\al}$, resp. $F_{\al}$, plays the role of $\DA$ for $S_{1}$, resp.
for $S_{2}$, and $g = g_{1} \otimes g_{2}$.
By Lemma \ref{mult1}, we now have
$$\omn(gx) = \omk(g_{1}x_{1}) \cdot \oml(g_{2}x_{2}) =
   \sum_{\beta,\gam \in~\Irr(D)}\beta(x_{1})\gam(x_{2}) \cdot E_{\beta}(g_{1})
   F_{\gam}(g_{2}).$$
It follows that
$$\DA(g)  = \frac{1}{|D|}\sum_{x \in D,~\beta,\gam \in \Irr(D)}
  \overline{\al(x)}\beta(x_{1})\gam(x_{2})E_{\beta}(g_{1})F_{\gam}(g_{2})$$
$$= \sum_{\beta,\gam}
    \left( \frac{1}{|D|}\sum_{x \in D}\overline{\al(x)}\beta(x)\gam(x)\right)
    E_{\beta}(g_{1})F_{\gam}(g_{2}) =
    \sum_{\beta,\gam}[\beta\gam,\al]E_{\beta}(g_{1})F_{\gam}(g_{2}),$$
where $[\cdot,\cdot]$ is the usual scalar product on the space of class
functions on $D$. We will apply this identity to the cases where
$\al \in \{\bal,\bbe\}$. In these cases, $\al$ is linear; furthermore,
any $\beta \in \Irr(D)$ is real. Hence
$[\beta\gam,\al] \neq 0$ if and only if $\beta = \al\gam$, which means
that $\beta = \gam$ if $\al = \bbe = 1_{D}$. If $\al = \bal$, the
unique non-principal linear irreducible character of $D$, then
$\al\gam$ equals $\gam$, resp. $\bbe$, or $\bal$, if $\gam = \bmu_{i}$,
resp. $\gam = \bal$, or $\gam = \bbe$. We have therefore shown that
$$(D_{\bal})|_{S_{1} \times S_{2}} =
  E_{\bal} \otimes F_{\bbe} + E_{\bbe} \otimes F_{\bal} +
  \sum^{(q-2)/2}_{i=1}E_{\bmu_{i}} \otimes F_{\bmu_{i}},$$
$$(D_{\bbe})|_{S_{1} \times S_{2}} =
  E_{\bal} \otimes F_{\bal} + E_{\bbe} \otimes F_{\bbe} +
  \sum^{(q-2)/2}_{i=1}E_{\bmu_{i}} \otimes F_{\bmu_{i}}.$$
On the other hand, by (\ref{forSD}) we have
$D_{\bal} = \ran + 1_{S}$ and $D_{\bbe} = \rbn + 1_{S}$; in particular,
$\bln = D_{\bbe} - D_{\bal}$, and similarly
$\blk = E_{\bbe} - E_{\bal}$ and $\bll = F_{\bbe} - F_{\bal}$. Hence the
statement follows.
\end{proof}

\subsection{Homogeneous unipotent elements}
In this subsection we consider unipotent elements of $S = Sp_{2n}(q)$
which are {\it homogeneous}, i.e. its Jordan canonical form on
$V = \F_{q}^{2n}$ contains only Jordan blocks of the same size. Recall
that we fix an $S$-invariant nondegenerate symplectic form $(\cdot,\cdot)$
on $V$. Let $J_{a}$ denote the $a \times a$ Jordan block with eigenvalue $1$.
We say that $g \in S$ is {\it decomposable}, if $V$ can be written as an
orthogonal sum of nonzero $g$-invariant subspaces, and {\it indecomposable}
otherwise.

\begin{lemma}\label{homog}
{ Assume that the Jordan canonical form of $g \in Sp_{2n}(q)$ on
$V = \F_{q}^{2n}$ is $kJ_{a} = J_{a} \oplus \ldots \oplus J_{a}$ with
$2n = ka$ and $a \geq 2$. If $a = 2$, assume in addition that
$g$ is indecomposable. Then one of the following holds.

{\rm (i)} $n \geq 2$, all the $g$-invariant quadratic forms on
$V$ which are polarized to $(\cdot,\cdot)$ have the same type $\e = \pm$,
and $\bln(g) = \e q^{k}$. Moreover, if $a = 2$ then $k = 2$, and
$\e = +$.

{\rm (ii)} $n = k = 1$, $a = 2$, $\bln(g) = 0$.}
\end{lemma}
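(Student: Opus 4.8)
The plan is to reduce the computation of $\bln(g) = \p^+(g) - \p^-(g)$ to a single Gauss sum. Since $\p^{\e}$ is the permutation character of $S$ on the quadratic forms of type $\e$ polarized to $(\cdot,\cdot)$, the value $\p^{\e}(g)$ counts the $g$-invariant such forms of type $\e$; hence $\bln(g) = \sum_{Q}\e(Q)$, the sum running over all $g$-invariant forms $Q$ polarized to $(\cdot,\cdot)$, where $\e(Q) = +1$ or $-1$ according to the type of $Q$. Writing $\psi(t) = (-1)^{\mathrm{tr}_{\F_{q}/\F_{2}}(t)}$ and using the classical identity $\sum_{v \in V}\psi(Q(v)) = \e(Q)q^{n}$ for a nondegenerate quadratic form $Q$ on $V = \F_{q}^{2n}$, I would interchange the two sums. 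Fixing one invariant form $Q_{0}$, every invariant form is $Q_{0} + f^{2}$ with $f$ ranging over the $g$-fixed functionals $(V^{*})^{g} = \mathrm{Ann}(\mathrm{im}(g-1))$, and since $\psi(f(v)^{2}) = \psi(f(v))$ the inner sum $\sum_{f}\psi(f(v))$ collapses to $q^{k}$ when $v \in \mathrm{im}(g-1)$ and to $0$ otherwise (here $k = \dim\Ker(g-1)$ is the number of Jordan blocks). This yields the clean formula
\[ \bln(g) = q^{k-n}\sum_{v \in \mathrm{im}(g-1)}\psi(Q_{0}(v)), \]
in which the restriction of $Q_{0}$ to $\mathrm{im}(g-1)$ is independent of the choice of $Q_{0}$.

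Next I would evaluate this Gauss sum over $W := \mathrm{im}(g-1)$. Its polar form is $(\cdot,\cdot)|_{W}$, whose radical is $R := W \cap W^{\perp}$; because $g$ is symplectic one has $W^{\perp} = \Ker(g-1)$, and since $a \geq 2$ we get $\Ker(g-1) \subseteq \mathrm{im}(g-1)$, so $R = \Ker(g-1)$ has dimension $k$. The key numerical coincidence is $(\dim W + \dim R)/2 = ((2n-k)+k)/2 = n$. Splitting $W = R \oplus W_{1}$ with $W_{1}$ nondegenerate and noting that $Q_{0}|_{R}$ is totally defective, the standard evaluation gives
\[ \sum_{v \in W}\psi(Q_{0}(v)) = \begin{cases} \e\, q^{\,n}, & Q_{0}|_{R} \equiv 0,\\ 0, & Q_{0}|_{R} \not\equiv 0,\end{cases} \]
where $\e = \pm$ is the type of the nondegenerate form $Q_{0}|_{W_{1}}$. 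Hence $\bln(g) = \e q^{k}$ in the first case and $0$ in the second, and the first case is precisely the assertion that all invariant forms share the type $\e$.

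Everything therefore comes down to deciding whether $Q_{0}$ vanishes on $R = \Ker(g-1)$. The tool is the invariance identity $Q_{0}((g-1)w) = (w,(g-1)w)$, valid for every $w$ because $Q_{0}$ is $g$-invariant and polarized to $(\cdot,\cdot)$. For homogeneous $g = kJ_{a}$ one has $\Ker(g-1) = \mathrm{im}((g-1)^{a-1})$, so any $v \in R$ is $(g-1)^{a-1}u$; taking $w = (g-1)^{a-2}u$ gives $Q_{0}(v) = ((g-1)^{a-2}u,\,(g-1)^{a-1}u)$. Since $g$ is symplectic, $\mathrm{im}((g-1)^{a-1})^{\perp} = \Ker((g-1)^{a-1})$, and $(g-1)^{a-1}(g-1)^{a-2}u = (g-1)^{2a-3}u = 0$ as soon as $a \geq 3$ (the block size being $a$). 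Thus for $a \geq 3$ the two vectors are orthogonal, $Q_{0}|_{R} \equiv 0$, and we land in case (i) with $\bln(g) = \e q^{k}$.

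The main obstacle, and the only place the indecomposability hypothesis enters, is the boundary case $a = 2$, where $2a-3 = 1 < a$ and the orthogonality argument breaks. Here I would invoke the classification of indecomposable symplectic unipotent elements in characteristic $2$: with all blocks of size $2$ the only indecomposables are the single block $V_{2}$ (giving $n = k = 1$) and the paired double block $W_{2}$ (giving $n = k = 2$). For $V_{2}$ a direct computation gives $Q_{0}(e_{1}) = (e_{2},e_{1}) \neq 0$ by nondegeneracy, so $Q_{0}|_{R} \not\equiv 0$ and $\bln(g) = 0$, which is case (ii). For $W_{2}$ one has $\mathrm{im}(g-1) = \Ker(g-1)$, so the nondegenerate part $W_{1}$ is zero-dimensional; its type is $+$ by convention, and checking $Q_{0}$ on the isotropic plane $\Ker(g-1) = \langle e_{1}, f_{1}\rangle$ shows $Q_{0}|_{R} \equiv 0$. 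This forces $\e = +$ and $\bln(g) = q^{2}$ with $k = 2$, exactly the supplementary assertion of (i). An alternative to this final case analysis would be to first reduce to indecomposable summands via the multiplicativity of $\bln$ (Lemma \ref{mult2}) applied to the orthogonal decomposition of $V$, and then to run the same Gauss-sum evaluation on each indecomposable factor.
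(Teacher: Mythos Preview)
Your argument is correct, and it takes a genuinely different route from the paper's. The paper works geometrically: it uses Spaltenstein's lemma to pin down the values of any invariant $Q$ on all but the last vector of each Jordan chain, observes that the span of the first vectors is totally singular for every such $Q$, and then invokes Witt's theorem to transport a maximal totally singular subspace into the common subspace $W^{\perp}$ on which all invariant $Q$ agree; this forces all of them to have the same Witt index. The boundary case $a=2$ is handled by a direct decomposability argument (if a block subspace were nondegenerate one could split it off), and $n=1$ by identifying $\bln$ with the Steinberg character.

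Your approach replaces the Witt-theorem step by a character-sum computation: the identity $\sum_{v}\psi(Q(v))=\e(Q)q^{n}$ turns $\bln(g)$ into a single Gauss sum over $\mathrm{im}(g-1)$, and the structural input reduces to the vanishing of $Q_{0}$ on the radical $R=\Ker(g-1)$, which you extract cleanly from the invariance identity $Q_{0}((g-1)w)=(w,(g-1)w)$ together with the nilpotency bound $(g-1)^{a}=0$. This is more analytic and yields the value of $\bln(g)$ and the ``all forms have the same type'' statement simultaneously, whereas the paper first proves the latter and then reads off the former from the counting $|\QC|=q^{k}$. Your formula $\bln(g)=q^{k-n}\sum_{v\in\mathrm{im}(g-1)}\psi(Q_{0}(v))$ is in fact valid for an arbitrary unipotent $g$ and could in principle shortcut parts of the later analysis.

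One small point worth making explicit in the $W_{2}$ case: the verification that $Q_{0}|_{R}\equiv 0$ rests on $(e_{1},e_{2})=(f_{1},f_{2})=0$, which is exactly where indecomposability is used (a nondegenerate block subspace would split off). You rely on this via the classification of indecomposables, which is fine, but it is the same observation the paper makes directly in its step~3.
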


\begin{proof}
1) Consider a basis
$(e_{1}, \ldots ,e_{a},f_{1}, \ldots ,f_{a}, \ldots ,h_{1}, \ldots ,h_{a})$,
in which $g$ is represented by the matrix $kJ_{a}$. Let $\QC$ be the set of
all $g$-invariant quadratic forms on $V$ which are polarized to
$(\cdot,\cdot)$. Then $\QC \neq \emptyset$ and in fact
$|\QC| = \taun(g) = q^{k}$ as mentioned above. By \cite[Lemma 6.10]{Spa},
if $Q \in \QC$ then
$$Q(e_{i}) = (e_{i},e_{i+1}), ~~~Q(f_{i}) = (f_{i},f_{i+1}), \ldots ,
  Q(h_{i}) = (h_{i},h_{i+1})$$
for $1 \leq i \leq a-1$. Thus $Q$ is completely determined by
the $k$-tuple $(Q(e_{a}), \ldots ,Q(h_{a})) \in \F_{q}^{k}$.

2) Suppose that $a \geq 3$ and some $Q \in \QC$ has type $+$.
Then we can find a symplectic basis $(u_{1}, \ldots ,u_{n},v_{1}, \ldots ,v_{n})$
of $V$ such that $Q(u_{i}) = Q(v_{i}) = 0$. Since $a \geq 3$,
by \cite[Lemma 6.10]{Spa} the subspace
$W := \langle e_{1},f_{1}, \ldots ,h_{1}\rangle_{\F_{q}}$ is totally singular
with respect to any $Q' \in \QC$, in particular with respect to $Q$;
moreover,
$$W^{\perp} = \langle e_{1}, \ldots,e_{a-1},f_{1}, \ldots,f_{a-1}, \ldots ,
  h_{1}, \ldots,h_{a-1} \rangle_{\F_{q}}.$$
Notice that $U := \langle u_{1},u_{2}, \ldots ,u_{a}\rangle_{\F_{q}}$ is
totally $Q$-singular of the same dimension $a$ as of $W$. Hence
by Witt's Theorem, $W = \varphi(U)$ and $W^{\perp} = \varphi(U^{\perp})$
for some $\varphi \in S$ which preserves $Q$. But
$U^{\perp}$ contains the $n$-dimensional totally $Q$-singular subspace
$M := \langle u_{1},u_{2}, \ldots ,u_{n}\rangle_{\F_{q}}$. It follows that
$W^{\perp}$ contains the $n$-dimensional totally $Q$-singular subspace
$\varphi(M)$. Now consider any $Q' \in \QC$. As mentioned in 1),
$Q'$ and $Q$ coincide on $W^{\perp}$. Hence $\varphi(M)$ is also
totally singular with respect to $Q'$ and so $Q'$ is of type $+$.

We have shown that all $Q \in \QC$ have the same type $\e = \pm$. Recall
that $\bln(g)$ is the difference between the number of $Q \in \QC$ of
type $+$ and the number of $Q \in \QC$ of type $-$. It follows that
$\bln(g) = \e q^{k}$.

3) Next we consider the case $a = 2$ and $k \geq 2$. If $(e_{1},e_{2}) \neq 0$,
then $E := \langle e_{1},e_{2}\rangle_{\F_{q}}$ is $g$-invariant and
non-degenerate and so $V = E \oplus E^{\perp}$, contradicting the assumption
that $g$ is indecomposable. Thus
$(e_{1},e_{2}) = (f_{1},f_{2}) = \ldots = (h_{1},h_{2}) = 0$. As mentioned in 2),
$e_{1}$ is orthogonal to all the vectors $e_{1}, \ldots ,h_{1}$. Since
$(\cdot,\cdot)$ is non-degenerate, we may assume that
$(e_{1},f_{2}) = b \neq 0$. Then again by \cite[Lemma 6.10]{Spa},
$(e_{2},f_{1}) = b$. One can now check that
$F := \langle e_{1},e_{2},f_{1},f_{2}\rangle_{\F_{q}}$ is $g$-invariant and
non-degenerate. By the assumption that $g$ is indecomposable, we must have
$k = 2$. Furthermore, $E$ is totally singular with respect to any
$Q \in \QC$. Now we can finish the argument as in 2).

Finally, assume $a = 2$ and $k = 1$. Then $\ran = 0$ and $\rbn$ is just the
Steinberg character (of degree $q$) of $S \simeq SL_{2}(q)$, whence
$\bln(g) = 0$.
\end{proof}

Of course if $a = 1$ in Lemma \ref{homog} then $\bln(g) = q^{k/2}$.
For a general unipotent element $u \in S$, it is well known
(see e.g. \cite[Cor. 6.12]{Spa}) that $V$ can be written as
an orthogonal sum of (possibly zero) $u$-invariant subspaces
$V = \oplus^{\infty}_{i=1}V_{i}$ such that all Jordan blocks of $u|_{V_{i}}$
are of size $i$. Hence, combining Lemmas \ref{mult2} and \ref{homog}
we can compute $\bln(u)$ for any unipotent element $u \in S$. To evaluate
$\bLn  = \Ind^{GL_{2n}(q)}_{Sp_{2n}(q)}(\bln)$ at $u$ we however need more
information about unipotent classes in $S$ with a given Jordan canonical form.
This will be done in the next subsection, which also is of independent
interest.

\subsection{A parametrization of unipotent classes in finite symplectic
and orthogonal groups in characteristic $2$}
The conjugacy classes of finite classical groups
are described in \cite{W1}. A new, better, treatment of
this topic, particularly in bad characteristics,
has recently been given in \cite{LS}. We will use the latter results to
give a parametrization of unipotent classes in finite symplectic
and orthogonal groups in characteristic $2$ which works well for our
purposes and which also has independent interest.

Let $2|q$ as above and let $g \in S = Sp_{2n}(q)$ be
any unipotent element.
It is shown in \cite{LS} that the natural
$S$-module $V = \F_{q}^{2n}$ can be written as an orthogonal sum
\begin{equation}\label{dec1}
  V|_{\langle g \rangle} =
  \sum_{i}W(m_{i})^{a_{i}} \oplus \sum^{r}_{j=1}V(2k_{j})^{b_{j}}
\end{equation}
of $g$-invariant non-degenerate subspaces of two types: $V(m)$ with even $m$,
on which $g$ has the Jordan form $J_{m}$, and $W(m)$, on which $g$ is
indecomposable and has the Jordan form $2J_{m}$; moreover,
\begin{equation}\label{dec2}
  m_{1} < m_{2} < m_{3} < \ldots,~~~k_{1} < k_{2} < k_{3} < \ldots, ~~~
  a_{i} > 0,~~~2 \geq b_{j} \geq 1.
\end{equation}
Given such a decomposition (\ref{dec1}) subject to (\ref{dec2})
(called a {\it canonical decomposition} in \cite{LS}),
let $I$ be the set of indices $i$ such that $m_{i}$ is odd and larger than
$1$ and there does {\bf not} exist $j$ such that $2k_{j} = m_{i} \pm 1$, and
let $s := |I|$.
Next, let $t$ be the number of indices $j$ such that $k_{j+1}-k_{j} \geq 2$.
We also fix $\delta \in \{0,1\}$ with $\delta = 1$ precisely when
$r > 0$ and $k_{1} > 1$. We can view $S$ as the fixed point subgroup
$\GC^{F}$ for a Frobenius endomorphism $F$ on
$\GC = Sp_{2n}(\overline{\F}_{q})$.

According to \cite[Theorem 5.1]{LS}, $g^{\GC} \cap S$ splits into
$2^{s+t+\delta}$ $S$-classes. Furthermore, $C_{S}(g)$ is a (not necessarily
split) extension of a $2$-group $D$ by $R$, where
$|D| = q^{\dim R_{u}(C_{\GC}(g))}$ and
\begin{equation}\label{cent}
  R = \left(\prod_{i~:~m_{i}\mbox{\tiny { even}}}Sp_{2a_{i}}(q)\right) \times
      \left(\prod_{i~:~m_{i}\mbox{\tiny{ odd}}}I_{2a_{i}}(q)\right)
      \times C_{2}^{t+\delta}.
\end{equation}
Here, $C_2$ denotes a cyclic group of order 2, $I_{2a_{i}}(q) = Sp_{2a_{i}}(q)$ if either $m_{i} = 1$ or there exists
$j$ such that $2k_{j} = m_{i} \pm 1$, and $I_{2a_{i}}(q) = O^{\e_{i}}_{2a_{i}}(q)$
for some $\e_{i} = \pm$ otherwise; in particular, $s$ is the number of
$O$-factors in the above factorization. We will see that these $\e_{i}$ are
related to the type of $g$-invariant quadratic forms discussed in
Lemma \ref{homog}(i).

If $r > 0$, partition $K := \{k_{1}, k_{2}, \ldots, k_{r}\}$ into a
disjoint union
$K_{\delta} \sqcup K_{\delta+1} \sqcup \ldots \sqcup K_{t+\delta}$ of
$t+1$ ``intervals'' of consecutive integers, such that
$$\max\{k \mid k \in K_{i}\} \leq \min\{l \mid l \in K_{i+1}\}-2;$$
this is possible by the definition of the parameter $t$. Now, if
$m_{i} > 1$ is odd and $m_{i} = 2k_{j} \pm 1 = 2k_{j'} \pm 1$ for distinct
$j$ and $j'$, then $|k_{j}-k_{j'}| = 1$ and so $k_{j}$ and $k_{j'}$ must
belong to the same interval $K_{u}$. In this situation, we will say that
$m_{i}$ is {\it linked} to $K_{u}$.
Next, for $\delta \leq u \leq t+\delta$, let
$$V_{u} = \left(\sum_{k_{j} \in K_{u}}V(2k_{j})^{b_{j}}\right)
  \oplus \left( \sum_{m_{i} > 1, \mbox{{\tiny { odd, linked to }}}K_{u}}
  W(m_{i})^{a_{i}}\right).$$
Also, if $I = \{i_{1}, \ldots ,i_{s}\}$, then set
$W_{v} := W(m_{i_{v}})^{a_{i_{v}}}$ for $1 \leq v \leq s$.
Finally, let
$$W_{0} = \sum_{m_{i} = 1 \mbox{\tiny { or }} m_{i} \mbox{\tiny { even}}}
  W(m_{i})^{a_{i}}.$$
Thus we obtain the decomposition
\begin{equation}\label{dec3}
  V|_{\langle g \rangle} = W_{0} \oplus W_{1} \oplus \ldots \oplus W_{s}
  \oplus V_{\delta} \oplus V_{1+\delta} \oplus \ldots \oplus V_{t+\delta}.
\end{equation}

\begin{theorem}\label{uni-Sp}
{Consider the decomposition (\ref{dec3}) for any unipotent element
$g \in S = Sp_{2n}(q)$. Let $\GC = Sp_{2n}(\overline{\F}_{q})$.
Then $g^{\GC} \cap S$ splits into
$2^{s+t+\delta}$ $S$-classes. Each such class is uniquely determined by
the sequence
$\bve = (\al_{1}, \ldots ,\al_{s},\beta_{1}, \ldots ,\beta_{t+\delta})$,
where $\al_{i}, \beta_{j} = \pm$, and every $g$-invariant quadratic form
polarized to $(\cdot,\cdot)$ on $W_{i}$ with $i \geq 1$,
respectively on $V_{j}$ with $j \geq 1$, is
of type $\al_{i}$, respectively $\beta_{j}$. Furthermore, if
the $S$-class of $g$ is determined by $\bve$, then in the factorization
(\ref{cent}) for $R$ the factor $I_{2a_{i}}(q)$ equals
$O^{\al_{v}}_{2a_{i}}(q)$ for $i = i_{v} \in I$, and $Sp_{2a_{i}}(q)$ otherwise.}
\end{theorem}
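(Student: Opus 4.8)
The plan is to take the class count $2^{s+t+\delta}$ and the centralizer structure (\ref{cent}) as \emph{given} by \cite[Theorem 5.1]{LS}, and to concentrate on producing a concrete complete invariant for the $S$-classes inside $g^{\GC} \cap S$, namely the sign sequence $\bve$ recording the types of the $g$-invariant quadratic forms on the summands of (\ref{dec3}). Since \cite{LS} already tells us there are exactly $2^{s+t+\delta}$ such classes, and there are exactly $2^{s+t+\delta}$ possible sequences $\bve$, it will suffice to check that $\bve$ is (a) well-defined as a function on $S$-classes and (b) either injective or surjective on classes; the matching finite counts then upgrade this to a bijection. The last clause, identifying $I_{2a_{i}}(q)$ with $O^{\al_v}_{2a_{i}}(q)$ for $i = i_v \in I$, will follow once I exhibit the isometry group of $Q|_{W_v}$ inside $C_S(g)$ as the corresponding factor of $R$.

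First I would fix the geometric class $g^{\GC}$ and, for each $g \in g^{\GC}\cap S$, let $\QC$ be its set of invariant quadratic forms polarized to $(\cdot,\cdot)$. The heart of well-definedness is to show that the type of $Q$ restricted to each summand $W_i$ ($i \geq 1$) and $V_j$ ($j \geq 1$) is independent of the choice of $Q \in \QC$. On the homogeneous summands $W_v = W(m_{i_v})^{a_{i_v}}$ with $m_{i_v}$ odd and $>1$, this is exactly Lemma \ref{homog}(i), which moreover pins the common type to the sign of $\bln|_{W_v}$. For $S$-invariance I would note that any $h \in S$ carries $\QC$ for $g$ isometrically onto the corresponding set for $hgh^{-1}$, preserving types and respecting the decomposition up to the intrinsic isomorphism type of each summand, so $\bve$ depends only on the $S$-class.

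The main obstacle is the analysis of the mixed summands $V_u$, which are not homogeneous: each glues the blocks $V(2k_j)$ for $k_j$ in an interval $K_u$ to the odd blocks $W(m_i)$ linked to $K_u$ via $2k_j = m_i \pm 1$. Here I would adapt the Witt-theorem argument in step 2 of Lemma \ref{homog}: produce a totally singular subspace on which every $Q \in \QC$ agrees (the linking relation $2k_j = m_i \pm 1$ is precisely what makes the odd block sit compatibly against the adjacent even block), and then use Witt's Theorem to force all $Q|_{V_u}$ to a common type $\beta_u$. The bookkeeping that exactly $t+\delta$ of these summands carry a \emph{free} sign must also be addressed: the $t+1$ intervals $K_{\delta}, \ldots, K_{t+\delta}$ decouple because consecutive intervals are separated by a gap $k_{j+1}-k_{j}\geq 2$, but when $\delta = 0$ (so $k_1 = 1$) the first interval contains the block $V(2)$, and, as in the degenerate case Lemma \ref{homog}(ii), this block forces both types to occur within the same $S$-class, so $V_0$ contributes no sign. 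This $\delta$-correction is the delicate point, and I expect it to require the most care.

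Finally, to obtain the bijection and the centralizer statement together, I would realize all $2^{s+t+\delta}$ sign patterns by building, on each free summand, a unipotent element equipped with an invariant quadratic form of the prescribed type (both types being available on an even-dimensional space) and assembling by orthogonal direct sum; injectivity then follows from Witt's Theorem, since two elements of $g^{\GC}\cap S$ sharing the sequence $\bve$ are isometric summand-by-summand and hence $S$-conjugate, using the classification of an invariant form together with a unipotent element (cf. \cite{Spa}, \cite{W1}). For the last clause I would observe that the isometry group of the form $Q|_{W_v}$ of type $\al_v$ is $O^{\al_v}_{2a_{i_v}}(q)$, that this group sits inside $C_S(g)$ as exactly the factor $I_{2a_{i}}(q)$ of $R$ attached to the index $i = i_v \in I$, while for the remaining even or linked indices the corresponding factor is the full $Sp_{2a_{i}}(q)$; this matches the statement that $s$ counts the orthogonal factors.
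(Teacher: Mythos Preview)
Your overall strategy---take the class count $2^{s+t+\delta}$ from \cite[Theorem 5.1]{LS} as given, show $\bve$ is a well-defined $S$-invariant, and then match cardinalities---is exactly what the paper does. Two comments, one minor and one a genuine gap.

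\textbf{Minor.} You flag the mixed summands $V_{u}$ as ``the main obstacle'' and propose to rerun the Witt argument of Lemma~\ref{homog} on them. This is unnecessary. Each $V_{u}$ with $u \geq 1$ is an orthogonal direct sum of pieces $V(2k_{j})^{b_{j}}$ and $W(m_{i})^{a_{i}}$ all of which have block size $\geq 3$ (since $k_{j}\geq 2$ for $u\geq 1$, and any linked odd $m_{i}$ is then $\geq 3$). Lemma~\ref{homog} already pins the type on each homogeneous piece, and the type of the sum is the product of the types on the pieces, so $\beta_{u}$ is well-defined without any new Witt argument. The paper does exactly this in one line. Your proposed route may also work, but it is harder and the ``linking'' plays no role in well-definedness---it is purely organizational.

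\textbf{Genuine gap.} Your argument for the last clause is incorrect. You write that ``the isometry group of the form $Q|_{W_{v}}$ of type $\al_{v}$ is $O^{\al_{v}}_{2a_{i_{v}}}(q)$'' and that this group sits inside $C_{S}(g)$ as the factor $I_{2a_{i_{v}}}(q)$. But $\dim W_{v} = 2m_{i_{v}}a_{i_{v}}$, so the full isometry group of $Q|_{W_{v}}$ is $O^{\al_{v}}_{2m_{i_{v}}a_{i_{v}}}(q)$, not $O^{\al_{v}}_{2a_{i_{v}}}(q)$; and in any case that full isometry group does not commute with $g|_{W_{v}}$. The factor $I_{2a_{i_{v}}}(q)$ arises instead from the action of $C_{\GC}(g)$ on a certain $2a_{i_{v}}$-dimensional subquotient of the Jordan filtration of $W_{v}$, and determining the sign $\e_{v}$ of the resulting finite orthogonal group is not a matter of reading off the type of $Q|_{W_{v}}$. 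The paper resolves this by reducing to the case $V = W_{v}$ (using that $F$ stabilizes each factor of $R$) and then computing directly, via Wall's centralizer formulas \cite[(3.7.3)--(3.7.5)]{W1}, that the $2'$-parts of $|C_{Sp(V)}(g)|$ and $|C_{O(V)}(g)|$ both equal $|O^{\al_{v}}_{2a_{i_{v}}}(q)|_{2'}$, forcing $\e_{v}=\al_{v}$. You need some argument of this kind; the identification you propose does not hold.
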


\begin{proof}
1) First we observe that the invariant $\bve$ is well-defined for $g$.
Indeed, by Lemma \ref{homog}, if $U$ is a $g$-invariant non-degenerate
subspace of $V$ of type $W(m)^{a}$ or $V(m)^{b}$ with $m \geq 3$, then
all $g$-invariant quadratic forms polarized to $(\cdot,\cdot)$ on $U$ have
the same type. This applies in particular to any $W_{i}$ with $i \geq 1$ and
any $V_{j}$ with $j \geq 1$, whence the observation follows. It is also clear
that $\bve$ is the same for all $x \in g^{S}$.

2) Next we aim to show that if two elements
$g, h \in g^{\GC} \cap S$ have the
same invariant $\bve$, then they are conjugate in $S$. Applying
\cite[Lemma 4.2]{LS} and conjugating $h$ by an element in $S$,
we may assume that $g$ and $h$ have the same canonical decomposition
(\ref{dec1}) and the subsequent decomposition (\ref{dec3}).

First we look at the case where the decomposition (\ref{dec3}) reduces
to $V = W_{0}$ or $V = V_{0} \oplus V_{0}$; in particular, $s=t=\delta=0$.
In this case, $g^{\GC} \cap S$ constitutes a single $S$-class by
\cite[Theorem 5.1 (ii)]{LS},
whence $g$ and $h$ are $S$-conjugate.

Next we look at the case where
the decomposition (\ref{dec3}) reduces to $V = V_{u}$. In this case,
$\delta = 1$, $s = t = 0$, and $g^{\GC} \cap S \neq \emptyset$ by
\cite[Theorem 5.1 (i)]{LS}; furthermore, $g^{\GC} \cap S$ splits into two
$S$-classes by \cite[Theorem 5.1 (ii)]{LS}. By
\cite[Theorem 5.1 (i)]{LS}, for each $\e = \pm$ we can pick
$u_{\e} \in g^{\GC} \cap S$ such that some, hence all by Lemma \ref{homog},
$u_{\e}$-invariant quadratic forms polarized to $(\cdot,\cdot)$ on $V$ are
of type $\e$. Also by Lemma \ref{homog} we have
$\bLn(u_{\e}) = \e q^{k}$ for some integer $k$ which depends only on
$g^{\GC}$. It follows that $u_{+}$ and $u_{-}$ are not $S$-conjugate,
whence $g^{\GC} \cap S = (u_{+})^{S} \sqcup (u_{-})^{S}$. Now, denoting
$\beta_{1} = \beta$, we see by Lemma \ref{homog} that
$\bLn(g) = \bLn(h) = \beta q^{k}$ and so $g$ and $h$ cannot be $S$-conjugate
to $u_{-\beta}$. Thus both $g$ and $h$ are $S$-conjugate to $u_{\beta}$.
The same argument also applies to the case where the decomposition
(\ref{dec1}) reduces to $V = W_{v}$.

We have therefore shown that all the pairs of elements
$(g|_{U},h|_{U})$ are conjugate in $Sp(U)$, where $U = W_{0}$ (or
$W_{0} \oplus V_{0}$ if $\delta = 0$), $W_{i}$ with $i > 0$, or $V_{j}$ with
$j > 0$. Since $V$ is the orthogonal sum of those subspaces $U$, we conclude
that $g$ and $h$ are conjugate in $Sp(V) = S$.

3) Consequently, each $g^{S}$ is uniquely determined by the sequence $\bve$.
It remains to determine the factors $I_{2a_{i}}(q)$ of type $O^{\pm}_{2a_{i}}(q)$
in the factorization (\ref{cent}) for $R$. As noted in the proof of
\cite[Theorem 5.1]{LS}, each of the summands in the canonical decomposition
(\ref{dec1}) can be written over $\F_{2}$ and so the Frobenius endomorphism
$F$ stabilizes each of the factors $Sp_{2a_{i}}(q)$, $I_{2a_{i}}(q)$, and
$C_{2}$ which appear in $C_{\GC}(g)/R_{u}(C_{\GC}(u))$ and in $R$. So
without loss of generality we may assume that the decomposition
(\ref{dec3}) reduces to $V = W_{v}$; in particular, $\bve = (\al_{v})$ and
$\dim V = 2a_{i_v}$. We fix a $g$-invariant quadratic form polarized to
$(\cdot,\cdot)$ on $V$ of type $\al_{v}$. Direct computation using relations
(3.7.3) -- (3.7.5) of \cite{W1} shows that the $2'$-parts of $|C_{Sp(V)}(g)|$ and
$|C_{O(V)}(g)|$ are both equal to $|O^{\al_{v}}_{2a_{i_v}}(q)|_{2'}$. On the other hand,
we know that $I_{2a_{i_v}}(q) = O^{\e_{v}}_{2a_{i_v}}(q)$ and so the $2'$-part of
$|C_{Sp(V)}(g)|$ is $|O^{\e_{v}}_{2a_{i_v}}(q)|_{2'}$. It follows that
$\e_{v} = \al_{v}$
and $I_{2a_{i_v}}(q) = O^{\al_{v}}_{2a_{i_v}}(q)$, as stated.
\end{proof}

Define $\HC = O(V \otimes _{\F_{q}} \overline{\F}_{q})$ (notice
that $\HC$ is disconnected) and a
Frobenius endomorphism $F$ on $\HC$ such that
$\HC^{F} = H := O^{\e}_{2n}(q)$ for $\e = \pm$. For any
unipotent element $g \in \HC$ we again consider the canonical decomposition
(\ref{dec1}). Let $I$ be the set of indices $i$ such that $m_{i}$ is odd
and there does {\bf not} exist $j$ such that $2k_{j} = m_{i} \pm 1$, and
let $s := |I|$.
Next, let $t$ be the number of indices $j$ such that $k_{j+1}-k_{j} \geq 2$.
We also fix $\delta \in \{0,1\}$ with $\delta = 1$ precisely when $r > 0$.
If $r > 0$, partition $K := \{k_{1}, k_{2}, \ldots, k_{r}\}$ into a
disjoint union $K_{1} \sqcup K_{2} \sqcup \ldots \sqcup K_{t+\delta}$ of
$t+\delta$ intervals of consecutive integers, such that
$$\max\{k \mid k \in K_{i}\} \leq \min\{l \mid l \in K_{i+1}\}-2.$$
As in the symplectic case, if
$m_{i}$ is odd and $m_{i} = 2k_{j} \pm 1 = 2k_{j'} \pm 1$ for distinct
$j$ and $j'$, then $|k_{j}-k_{j'}| = 1$ and so $k_{j}$ and $k_{j'}$ must
belong to the same interval $K_{u}$. In this situation, we will again say
that $m_{i}$ is {\it linked} to $K_{u}$.
Let $W_{0} = \sum_{2|m_{i}}W(m_{i})^{a_{i}}$ and $I = \{i_{1}, \ldots ,i_{s}\}$.
Next, we define
$$V_{u} = \left(\sum_{k_{j} \in K_{u}}V(2k_{j})^{b_{j}}\right)
  \oplus \left( \sum_{m_{i} \mbox{{\tiny { odd, linked to }}}K_{u}}
  W(m_{i})^{a_{i}}\right)$$
if $r > 0$ and $1 \leq u \leq t+\delta$, and
$W_{v} := W(m_{i_{v}})^{a_{i_{v}}}$ for $1 \leq v \leq s$.
Thus we obtain the decomposition
\begin{equation}\label{dec4}
  V|_{\langle g \rangle} = W_{0} \oplus W_{1} \oplus \ldots \oplus W_{s}
  \oplus V_{1} \oplus V_{2} \oplus \ldots \oplus V_{t+\delta}.
\end{equation}
We say that $g$ is {\it exceptional}
if $V|_{\langle g \rangle} = \sum_{i}W(m_{i})^{a_{i}}$ with
all $m_{i}$ even. Now we exhibit the following
analogue of Theorem \ref{uni-Sp} for orthogonal groups.

\begin{theorem}\label{uni-O}
{Consider the decomposition (\ref{dec3}) for any unipotent element
$g \in \HC = O_{2n}(\overline{\F}_{q})$. Let
$H := O^{\e}_{2n}(q)$ and $K := \Omega^{\e}_{2n}(q)$ for some $\e = \pm$.

{\rm (i)} Then $g^{\HC} \cap H \neq \emptyset$ unless
$H = O^{-}_{2n}(q)$ and $g$ is exceptional,
in which case $g^{\HC} \cap H = \emptyset$.

{\rm (ii)} Assume $g^{\HC} \cap H \neq \emptyset$.
Then $g^{\HC} \cap H$ splits into
$2^{s+t+\delta-1}$ $H$-classes, if $g$ is not exceptional, and constitutes
a single $H$-class, if $g$ is exceptional.

\hspace{4mm}{\rm (a)} Each such $H$-class constitutes a single $K$-class, except when $g$ is exceptional, in which
case it splits into two $K$-classes.

\hspace{4mm}{\rm (b)} Each such $H$-class is uniquely determined by
the sequence
$\bve = (\al_{1}, \ldots ,\al_{s},\beta_{1}, \ldots ,\beta_{t+\delta})$,
where $\al_{i}, \beta_{j} = \pm$, and every $g$-invariant quadratic form
polarized to $(\cdot,\cdot)$ on $W_{i}$ with $i \geq 1$,
respectively on $V_{j}$ with $j \geq 1$, is
of type $\al_{i}$, respectively $\beta_{j}$, and
\begin{equation}\label{type}
  \prod^{s}_{i=1}\al_{i} \cdot \prod^{t+\delta}_{j=1}\beta_{j} = \e.
\end{equation}

\hspace{4mm}{\rm (c)} If the $H$-class of $g \in H$ is determined by $\bve$, then
$C_{H}(g)$ is an extension
of a $2$-group of order $q^{\dim R_{u}(C_{\HC}(g))}$ by $R$, and the factorization
(\ref{cent}) holds for $R$, the factor $I_{2a_{i}}(q)$ equals
$O^{\al_{v}}_{2a_{i}}(q)$ for $i = i_{v} \in I$, and $Sp_{2a_{i}}(q)$ otherwise.
}
\end{theorem}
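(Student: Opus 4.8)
The plan is to follow the blueprint of the proof of Theorem~\ref{uni-Sp}, feeding in the orthogonal counterparts of the results of \cite{LS} and adding the two features that are genuinely new for disconnected $\HC$: the type constraint (\ref{type}), which produces the existence dichotomy in part (i), and the Dickson invariant, which controls the passage from $H$-classes to $K$-classes in part (a). The simplification compared with the symplectic case is that $H = O^{\e}_{2n}(q)$ preserves a single quadratic form $Q$ of type $\e$, so the signs $\al_i, \beta_j$ are simply the types of the restrictions $Q|_{W_i}$ and $Q|_{V_j}$ to the summands of (\ref{dec4}).

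First I would dispose of (b), the count in (ii), and (i) together. That $\bve$ is well defined and is a class invariant follows from Lemma~\ref{homog}: on each $W_i$ with $i \geq 1$ and each $V_j$ with $j \geq 1$ every $g$-invariant polarized quadratic form has a constant type, so $Q$ restricted there has a well-defined type $\al_i$, resp. $\beta_j$. Since $V$ is the orthogonal sum of $W_0, W_1, \ldots, W_s, V_1, \ldots, V_{t+\delta}$, its type $\e$ is the product of the types of these summands; the key point is that $W_0$, built from even blocks $W(m)$, which are each of type $+$ by Lemma~\ref{homog}(i), has type $+$, whence $\e = \prod_i \al_i \cdot \prod_j \beta_j$, i.e. (\ref{type}). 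Conversely, two elements of $g^{\HC} \cap H$ with the same $\bve$ are $H$-conjugate: after using \cite[Lemma 4.2]{LS} to put them in a common decomposition (\ref{dec4}), one matches them summand by summand, each homogeneous piece being treated exactly as in steps 2)--3) of the proof of Theorem~\ref{uni-Sp}, with the type of $Q$ now playing the role of the sign detected there. Thus the $H$-classes in $g^{\HC} \cap H$ correspond bijectively to the admissible sequences $\bve$, those satisfying (\ref{type}); there are $2^{s+t+\delta-1}$ of them once one sign is free (the non-exceptional case), and in the exceptional case $s = t+\delta = 0$, so the only admissible sequence is the empty one with product $+$. This yields the count in (ii) and gives (i): for $\e = -$ no admissible $\bve$ exists exactly when $g$ is exceptional. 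This should also be cross-checked against the orthogonal form of \cite[Theorem 5.1]{LS}.

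For (a) I would use the Dickson invariant $D \colon H \to \{\pm\}$ with $\ker D = K = \Omega^{\e}_{2n}(q)$. The number of $K$-classes inside a fixed $H$-class $g^H$ is $[H : K\,C_H(g)]$, which is $1$ if $C_H(g) \not\leq K$ and $2$ if $C_H(g) \leq K$; so everything reduces to deciding whether $C_H(g)$ contains an element of nontrivial Dickson invariant, and here I would feed in the centralizer structure from part (c). In the non-exceptional case $s \geq 1$ or $t+\delta \geq 1$, so the factorization (\ref{cent}) of $R$ has either an orthogonal factor $O^{\al_v}_{2a_i}(q)$ or one of the $C_2$ factors indexed by $t+\delta$; an element of such a factor is an isometry of $V$ with $D = -$, so $C_H(g) \not\leq K$ and $g^H$ is a single $K$-class. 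In the exceptional case $R$ is a product of symplectic groups, which lie in $K$, and one checks that the unipotent radical $R_u(C_{\HC}(g))^F$ also lies in $K$; hence $C_H(g) \leq K$ and $g^H$ splits into two $K$-classes.

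Part (c) would then go exactly as step 3) of the proof of Theorem~\ref{uni-Sp}: since the summands of (\ref{dec1}) are defined over $\F_2$, the Frobenius $F$ stabilizes each factor of the centralizer, so one may reduce to a single block $V = W_v$ and compare, using Wall's relations (3.7.3)--(3.7.5) of \cite{W1}, the $2'$-part of $|C_{O(V)}(g)|$ with $|O^{\al_v}_{2a_i}(q)|_{2'}$; this forces the orthogonal factor to be $O^{\al_v}_{2a_i}(q)$. I expect the main obstacle to be part (a), specifically the honest Dickson-invariant computation against the block structure (\ref{dec4}): showing that in the exceptional case both the symplectic factors and the unipotent radical of $C_{\HC}(g)$ land in $\Omega$, while in the non-exceptional case the orthogonal and $C_2$ factors do not. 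The accompanying geometric point that needs care is the claim that every even block $W(m)$ has type $+$, on which both (\ref{type}) and the existence statement (i) rest.
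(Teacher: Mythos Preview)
Your overall plan matches the paper's proof closely, and parts (c) and the injectivity half of (b) go through essentially as you describe. There is, however, one genuine gap and one misattribution worth flagging.

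\textbf{The gap: well-definedness of $\bve$ when $m_{i_1} = 1$ or $k_1 = 1$.} In the orthogonal setting the index set $I$ is defined differently from the symplectic case: here $m_i = 1$ is allowed. For such a $W_v = W(1)^{a_{i_v}}$ the element $g$ acts trivially, so Lemma~\ref{homog} (which requires $a \geq 2$) says nothing, and both types of quadratic form occur; the type $\al_v$ of $Q|_{W_v}$ may a priori depend on the chosen canonical decomposition, not just on $g^H$. Similarly, if $k_1 = 1$ then $V_1$ contains $V(2)$-summands on which Lemma~\ref{homog}(ii) gives $\bln = 0$ rather than a definite sign, so $\beta_1$ is again not directly an invariant of the class. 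The paper resolves this by observing that at most one of these two degenerate situations can occur (if $k_1 = 1$ then $m_i = 1$ would be linked to $K_1$, hence not in $I$), so at most one sign in $\bve$ is ambiguous, and that single sign is then forced by the constraint (\ref{type}) once the other signs are known. Your argument as written appeals only to Lemma~\ref{homog} and does not handle this.

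\textbf{The misattribution.} Lemma~\ref{homog}(i) asserts $\e = +$ only in the special case $a = 2$; for $W(m)$ with $m \geq 4$ even it tells you only that all $g$-invariant forms have the \emph{same} type, not that this type is $+$. The paper obtains ``$W_0$ is of type $+$'' from \cite[Theorem 5.1(i)]{LS} rather than from Lemma~\ref{homog}. You flag this point as needing care, which is right, but the attribution in the body of your argument is incorrect.

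\textbf{Part (a).} Your direct Dickson-invariant approach is sound in principle, but the paper takes a shorter route: in the exceptional case $C_{\HC}(g)$ is connected by \cite[Theorem 4.20]{LS}, whence $C_H(g) \leq H \cap \HC^{\circ} = K$ and the class splits; in the non-exceptional case $C_H(g) \not\leq K$ is read off from \cite[Prop.~4.21]{LS}. This sidesteps the explicit verification you anticipate as the main obstacle.
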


\begin{proof}
(i) Notice that if $g \in H$ is exceptional
then $g \in K$ as the quasideterminant
$(-1)^{\dim \Ker(g-1)}$ is $1$. Hence the statement follows from
\cite[Theorem 5.1 (i)]{LS}.

(ii) First we consider the case $g$ is exceptional;
in particular, $s = t = \delta = 0$. Then $C_{\HC}(g)$ is
connected by \cite[Theorem 4.20]{LS}, hence $g^{\HC} \cap H$ constitutes
a single $H$-class. The connectedness of $C_{\HC}(g)$ also implies that
$C_{H}(g) \leq H \cap \HC^{\circ} = K$ (the latter equality can be seen by
using quasi-determinant), whence $g^{H}$ splits into two $K$-classes. The
structure of $C_{H}(g)$ is described in \cite[Theorem 5.1 (iii)]{LS}.

From now on we may assume that $g$ is not exceptional. In particular,
$C_{H}(g) \not\leq K$ (see e.g. the proof of \cite[Prop. 4.21]{LS}), hence
$g^{H}$ constitutes a single $K$-class. By the same reason,
$g^{\HC} = g^{\HC^{\circ}}$.

Next, by \cite[Theorem 5.1 (i)]{LS}, every $g$-invariant
quadratic form polarized to $(\cdot,\cdot)$ on $W_{0}$ is of type $+$. Also,
according to Lemma \ref{homog}, all such
forms on $W_{v}$, respectively on $V_{u}$, have the same type $\al_{v}$,
respectively $\beta_{u}$, as long as $m_{i_{v}} > 1$, respectively
$\min \{k \mid k \in K_{u}\} > 1$. Let $\al_{1}$,
respectively $\beta_{1}$, denote the type of such a form on $W_{1}$ with
$m_{i_{1}} = 1$, respectively on $W_{1}$ with $k_{1} = 1$. We claim that in
this situation, $\al_{1}$, respectively $\beta_{1}$, is also uniquely determined
by $g^{H}$, and moreover in all cases (\ref{type}) holds. Indeed, the
latter equality follows from the decomposition (\ref{dec4}) and the fact that
the type of $W_{0}$ is $+$. Assume $k_{1} = 1$. Notice that in this case
$m_{i_{1}} > 1$ (as otherwise it is linked to $K_{1} \ni k_{1} = 1$). Thus all
$\al_{i}$ with $i \geq 1$ and all $\beta_{j}$ with $j \geq 2$ are uniquely determined
by $g^{H}$, and so is $\beta_{1}$ by virtue of (\ref{type}). The same argument
applies to the case $k_{1} > 1$ and $m_{i_{1}} = 1$.

Thus we have shown that the invariant $\bve$ is well defined for $g^{H}$.
Furthermore, Theorem \ref{uni-Sp} implies that, given any $\beta_{u} = \pm$,
there is a $g$-invariant quadratic form polarized to $(\cdot,\cdot)$ on $V_{u}$
of type $\beta_{u}$. (Indeed, the claim is clear if $V_{u}$ does not involve any
summand $W(m_{i})$ with $m_{i} = 1$. The statement also holds in the case
$V_{u}$ involves the summand $W(1)^{2a}$: just write
$V_{u} = Y \oplus W(1)^{2a}$, fix a $g$-invariant quadratic form polarized to
$(\cdot,\cdot)$ on $Y$ say of type $\gam$ and then choose any quadratic form of
type $\gam\beta_{u}$ on $W(1)^{2a}$, on which $g$ acts trivially.) Similarly,
given any $\al_{v} = \pm$, there is a $g$-invariant quadratic form polarized to
$(\cdot,\cdot)$ on $W_{v}$ of type $\al_{v}$. Thus there are exactly
$2^{s+t+\delta-1}$ possible values for the sequence $\bve$ subject to the condition
(\ref{type}), whence the conclusion (b) follows.

Finally, the arguments given in part 3 of the proof of Theorem \ref{uni-Sp}
also show that $I_{2a_{v}}(q) = O^{\al_{v}}_{2a_{i_v}}(q)$.
\end{proof}

\subsection{The induced virtual character
$\bLn = \Ind^{GL_{2n}(q)}_{Sp_{2n}(q)}(\bln)$}
Denote $G = GL_{2n}(q)$, $S = Sp_{2n}(q)$, and $\GC = Sp_{2n}(\overline{\F}_{q})$ as usual. For any unipotent element
$g \in G$, if $g^{G} \cap S = \sqcup^{N}_{i=1}(g_{i})^{S}$ splits into $N$
$S$-classes, then the definition of induced characters yields
\begin{equation}\label{sum}
  \bLn(g) = |C_{G}(g)| \cdot \sum^{N}_{i=1}\frac{\bln(g_{i})}{|C_{S}(g_{i})|}.
\end{equation}
Clearly, $g^{G} \cap S$ is just the set of all unipotent elements $x \in S$ with
the same Jordan canonical form as of $g$. It follows that $g^{G} \cap S$ is the
union of $g^{\GC} \cap S$ for all $\GC$-classes $g^{\GC}$ with the same
Jordan canonical form.

Consider the canonical decomposition (\ref{dec1}) for $g^{\GC} \cap S$. By
Lemmas \ref{mult2} and \ref{homog}, $\bln(g) = 0$ if $k_{1} = 1$. Assume that
$r > 0$ and $k_{1} > 1$; in particular, $\delta = 1$. By Theorem \ref{uni-Sp},
$g^{\GC} \cap S$ splits into $2^{s+t+1}$ $S$-classes which are uniquely determined
by the sequence $\bve$. Now we use the decomposition (\ref{dec3}) and Lemma
\ref{mult2} to compute $\bln(g)$:
$$\bln(g) = \prod^{s}_{i=0}\bl_{(\dim W_i)/2}(g|_{W_i}) \cdot
  \prod^{t+1}_{j=1}\bl_{(\dim V_j)/2}(g|_{V_j}).$$
First we look at $g|_{W_0}$. Allowing $a_{1}$ to be zero if necessary, we may assume
that $m_{1} = 1$. As mentioned in the proof of Theorem \ref{uni-O}, all
$g$-invariant quadratic forms polarized to $(\cdot,\cdot)$ on $W_0$ are of type
$+$. Applying Lemma \ref{homog}, we see that
$$\bl_{(\dim W_0)/2}(g|_{W_0}) = q^{a_{1}+2\sum_{2|m_{i}}a_{i}}.$$
On the other hand, by Lemmas \ref{mult2} and \ref{homog} we have
$$\bl_{(\dim W_v)/2}(g|_{W_v}) = \al_{v}q^{2a_{i_{v}}}$$
for $1 \leq v \leq s$ , and
$$\bl_{(\dim V_u)/2}(g|_{V_u}) = \beta_{u}q^{\sum_{k_{j} \in K_{u}}b_{j}
  + \sum_{m_{i} > 1, \mbox{{\tiny { odd, linked to }}}K_{u}}2a_{i}}$$
for $1 \leq u \leq t+1$. Thus, there is an explicit constant $C$ depending only
on $g^{\GC} \cap S$ such that $\bln(g) = [\bve] \cdot q^{C}$, if
the conjugacy class $g^{S}$ is determined by $\bve$ and
$[\bve] := \prod^{s}_{v=1}\al_{v} \cdot \prod^{t+1}_{u=1}\beta_{u}$. Recall we are
assuming that $r > 0$ and $k_{1} > 1$. Then we can pair up the $2^{s+t+1}$
$S$-classes in $g^{\GC} \cap S$ into $2^{s+t}$ pairs, each consisting of
$(h_{+})^{S}$ and $(h_{-})^{S}$, determined by
$\bve_{+}$ and $\bve_{-}$, which differ only at $\beta_{1} = \pm$ and have
the same $\al_{i}$ with $i > 0$ and the same $\beta_{j}$ with $j > 1$. The
above computation shows that $\bln(h_{-}) = -\bln(h_{+})$. On the other hand,
$|C_{S}(h_{+})| = |C_{S}(h_{-})|$ by Theorem \ref{uni-Sp}. Hence the contributions
of the pair $(h_{+})^{S}$ and $(h_{-})^{S}$ to $\bLn(g)$ in (\ref{sum}) cancel out
each other, and so the total contribution of $g^{\GC} \cap S$ in
(\ref{sum}) is $0$.

We have shown that the only nonzero contributions in (\ref{sum}) can only
come from the classes in $g^{\GC} \cap S$ with $r = t = \delta = 0$. In particular,
$\bLn(g) = 0$ if the multiplicity $c_{i}$ of some Jordan block $J_{i}$ in the
Jordan canonical form $\sum^{\infty}_{i=1}c_{i}J_{i}$ of $g$ is odd. Thus we may now
assume that $c_{i}$ is even for all $i$, and the decompositions
(\ref{dec1}) and (\ref{dec3}) of $g$ reduce to
$$V = \sum_{i}W(m_{i})^{a_{i}} = W_{0} \oplus W_{1} \oplus \ldots \oplus W_{s}$$
(so $a_{i} = c_{m_{i}}$). We will assume that the $S$-class of
$g_{\bve} \in S$ is determined by $\bve = (\al_{1}, \ldots ,\al_{s})$.
The above computation then shows that
$$\bln(g_{\bve}) = [\bve] \cdot q^{c_{1}/2+\sum_{i > 1}c_{i}}$$
with $[\bve] = \prod^{s}_{j=1}\al_{j}$, and, according to Theorem \ref{uni-Sp},
$$|C_{S}(g_{\bve})| = q^{D} \cdot \prod_{m_{i} = 1 \mbox{{\tiny { or }}} 2|m_{i}}
  |Sp_{2a_{i}}(q)| \cdot \prod^{s}_{v=1}|O^{\al_{v}}_{2a_{i_{v}}}(q)|$$
where $I = \{i_{1}, \ldots ,i_{s}\}$ is the set of indices $i$ such that
$m_{i} > 1$ is odd, as before.

Observe that
$$\sum_{\al = \pm}\frac{\al 1}{|O^{\al}_{2a}(q)|} = \frac{q^{a}}{|Sp_{2a}(q)|}.$$
Hence, the $2^{s}$ contributions of all $g_{\bve}$ in (\ref{sum}) sum up to
$$\bLn(g) = \frac{|C_{G}(g)| \cdot q^{-D + c_{1}/2+\sum_{i > 1}c_{i}}}
  {\prod_{m_{i} = 1 \mbox{{\tiny { or }}} 2|m_{i}}|Sp_{2a_{i}}(q)|} \cdot
  \prod^{s}_{v=1}\frac{q^{a_{i_{v}}}}{|Sp_{2a_{i_{v}}}(q)|} = $$
$$=  \frac{|C_{G}(g)| \cdot q^{-D + \sum_{i > 1}c_{i}+\sum_{i \mbox{{\tiny { odd}}}}c_{i}/2}}
  {\prod_{i}|Sp_{c_{i}}(q)|},$$
where we use the convention that $|Sp_{0}(q)| = 1$. By \cite{W1},
$$D = \dim R_{u}(C_{\GC}(g)) = \sum_{i < j}ic_{i}c_{j} + \sum_{i}(i-1)c_{i}^{2}/2
    + \sum_{2|i}c_{i}/2 + \sum_{i > 1 \mbox{{\tiny { odd}}}}c_{i}.$$
Putting everything together, we obtain

\begin{theorem}\label{induced}
{Let $g \in Sp_{2n}(q)$ be a unipotent element with Jordan canonical form
$\sum^{\infty}_{i=1}c_{i}J_{i}$. Then
$$\bLn(g) = q^{\frac{1}{2}\sum_{i}c_{i}-\sum_{i < j}ic_{i}c_{j} - \frac{1}{2}\sum_{i}(i-1)c_{i}^{2}}
  \cdot \frac{|C_{GL_{2n}(q)}(g)|}{\prod_{i}|Sp_{c_{i}}(q)|}$$
if all $c_{i}$ are even, and $\bLn(g) = 0$ otherwise.
\hfill $\Box$}
\end{theorem}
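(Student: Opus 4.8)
The plan is to compute $\bLn(g)$ directly from the induction formula (\ref{sum}), reducing every ingredient to the canonical decomposition (\ref{dec1}) together with the class data from Theorem \ref{uni-Sp} and the character values from Lemmas \ref{mult2} and \ref{homog}. The starting observation is that $g^{G} \cap S$ is exactly the set of unipotent elements of $S$ with Jordan type $\sum_{i}c_{i}J_{i}$, and that it is the disjoint union of the sets $g^{\GC} \cap S$ over the $\GC$-classes of that type. For each such $\GC$-class, Theorem \ref{uni-Sp} splits it into $S$-classes indexed by sign sequences $\bve$ and records the centralizer order through (\ref{cent}); Lemma \ref{mult2} then factors $\bln(g_{\bve})$ over the orthogonal decomposition (\ref{dec3}), and Lemma \ref{homog} evaluates each factor, giving $\bln(g_{\bve}) = [\bve] \cdot q^{C}$ with $[\bve]$ the product of the signs and $C$ depending only on $g^{\GC}$.

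The heart of the argument is a cancellation that eliminates all classes except those built entirely from $W$-type summands. If the decomposition contains a summand $V(2k)$ with $k=1$, then Lemmas \ref{mult2} and \ref{homog}(ii) force $\bln=0$. If instead $r>0$ and $k_{1}>1$, so $\delta=1$, I would pair the $2^{s+t+1}$ classes by flipping the leading sign $\beta_{1}$: this negates $\bln(g_{\bve})$ while leaving $|C_{S}(g_{\bve})|$ unchanged (Theorem \ref{uni-Sp}), so the paired terms in (\ref{sum}) cancel. Hence only classes with $r=t=\delta=0$ contribute, which forces every summand to be of type $W$ and every multiplicity $c_{i}=2a_{i}$ to be even. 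This already yields $\bLn(g)=0$ whenever some $c_{i}$ is odd.

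In the surviving case the sum in (\ref{sum}) runs over the $2^{s}$ sign sequences $\bve=(\al_{1},\ldots,\al_{s})$ attached to the odd blocks $m_{i_{v}}>1$. Because both $\bln(g_{\bve})$ and $|C_{S}(g_{\bve})|$ factor over $W_{0},W_{1},\ldots,W_{s}$, the expression $\sum_{\bve}\bln(g_{\bve})/|C_{S}(g_{\bve})|$ becomes a product whose odd-block factors each equal $\sum_{\al=\pm}\al/|O^{\al}_{2a}(q)|$. The elementary identity $\sum_{\al=\pm}\al/|O^{\al}_{2a}(q)|=q^{a}/|Sp_{2a}(q)|$ converts each such factor into a symplectic one, producing $\prod_{i}|Sp_{c_{i}}(q)|$ in the denominator. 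It then remains to insert Wall's formula for $D=\dim R_{u}(C_{\GC}(g))$ and verify that the accumulated power of $q$ collapses to $\frac{1}{2}\sum_{i}c_{i}-\sum_{i<j}ic_{i}c_{j}-\frac{1}{2}\sum_{i}(i-1)c_{i}^{2}$.

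I expect the principal difficulty to be the combinatorial bookkeeping of the canonical decomposition: correctly apportioning each Jordan multiplicity $c_{i}$ among $W$- and $V$-summands, and making sure the sign-pairing is exhaustive over all classes with $\delta=1$ or $t>0$. By contrast, the orthogonal-versus-symplectic order identity and the final exponent simplification are routine once the indexing has been pinned down, so the representation-theoretic input from Lemmas \ref{mult2}, \ref{homog} and Theorem \ref{uni-Sp} does the real work.
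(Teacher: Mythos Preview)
Your proposal is correct and follows essentially the same route as the paper: the induction formula (\ref{sum}), vanishing via Lemma \ref{homog}(ii) when $k_{1}=1$, cancellation by flipping $\beta_{1}$ when $\delta=1$, reduction to the $r=0$ case forcing all $c_{i}$ even, and then the identity $\sum_{\al=\pm}\al/|O^{\al}_{2a}(q)|=q^{a}/|Sp_{2a}(q)|$ together with Wall's formula for $D$. The paper carries out exactly this computation in \S2.5, so your outline matches it step for step.
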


Noting from Theorem \ref{induced} that $\bLn(g) \geq 0$, the following corollary is immediate.

\begin{cor} A random unipotent element of $Sp_{2n}(q)$ with given Jordan canonical form fixes a positive
type quadratic form with probability at least as large as that of fixing a negative type quadratic form.
\end{cor}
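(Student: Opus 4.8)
The plan is to unwind the definitions so that the nonnegativity $\bLn(g) \geq 0$ supplied by Theorem \ref{induced} becomes an averaged statement about fixed quadratic forms. First I would recall, from the discussion in Subsection \ref{permchar}, that for a unipotent $u \in S$ the value $\p^{+}(u)$, respectively $\p^{-}(u)$, is exactly the number of $u$-invariant quadratic forms polarized to $(\cdot,\cdot)$ of type $+$, respectively $-$, since $\p^{+}$ and $\p^{-}$ are the permutation characters on the two orbits of quadratic forms. Hence $\bln(u) = \p^{+}(u) - \p^{-}(u)$ is precisely the excess of fixed positive-type forms over fixed negative-type forms, and $\p(u) = \p^{+}(u) + \p^{-}(u) = \taun(u) = q^{\sum_{i}c_{i}}$ is the total number of fixed forms, a constant depending only on the Jordan type $\sum_{i}c_{i}J_{i}$.

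Next I would make the probability space precise. The set $g^{GL_{2n}(q)} \cap S$ is exactly the collection of all unipotent elements of $S$ whose Jordan canonical form equals that of $g$, so the uniform measure on this set is the distribution referred to in the statement. Writing $g^{GL_{2n}(q)} \cap S = \sqcup^{N}_{i=1}(g_{i})^{S}$ and using $|(g_{i})^{S}| = |S|/|C_{S}(g_{i})|$, I would sum $\bln$ over all elements to get $\sum_{u}\bln(u) = |S|\sum^{N}_{i=1}\bln(g_{i})/|C_{S}(g_{i})|$, which by the induction formula (\ref{sum}) equals $(|S|/|C_{GL_{2n}(q)}(g)|) \cdot \bLn(g)$.

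Combining these two observations with Theorem \ref{induced}, which gives $\bLn(g) \geq 0$, I conclude that $\sum_{u}(\p^{+}(u) - \p^{-}(u)) \geq 0$, the sum being over all unipotent $u$ of the prescribed Jordan type. Dividing by the cardinality of $g^{GL_{2n}(q)} \cap S$ shows that, for a uniformly random such $u$, the expected number of fixed positive-type forms is at least the expected number of fixed negative-type forms; equivalently, since the total count $\p(u)$ of fixed forms is constant over the Jordan type, the probability that a uniformly chosen $u$-invariant form is of positive type is at least that of it being of negative type. This is the asserted inequality.

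The one point I would treat with care is the bookkeeping in the second paragraph: one must verify that the weights $1/|C_{S}(g_{i})|$ appearing in (\ref{sum}) are exactly proportional to the $S$-class sizes, so that the weighted sum there genuinely represents a uniform average over $g^{GL_{2n}(q)} \cap S$ rather than some other reweighting. Granting this, the argument is a direct translation of character values into counts of fixed forms, and the substantive input is entirely contained in the sign information $\bLn(g) \geq 0$ from Theorem \ref{induced}.
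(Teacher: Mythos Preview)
Your argument is correct and is precisely the unwinding the paper leaves implicit: the paper merely says the corollary is ``immediate'' from the nonnegativity $\bLn(g) \geq 0$ given by Theorem \ref{induced}, and your proof spells out that immediacy by identifying $\bln(u) = \p^{+}(u) - \p^{-}(u)$ with the signed count of fixed forms and using (\ref{sum}) to recognize $\bLn(g)$ as (a positive multiple of) the uniform average of $\bln$ over $g^{GL_{2n}(q)} \cap S$. The bookkeeping point you flag is standard: since $|(g_{i})^{S}| = |S|/|C_{S}(g_{i})|$, the weights $1/|C_{S}(g_{i})|$ in (\ref{sum}) are indeed proportional to the $S$-class sizes, so the weighted sum there is exactly $|C_{G}(g)|/|S|$ times $\sum_{u}\bln(u)$, and your observation that $\p(u) = q^{\sum_{i}c_{i}}$ is constant across the Jordan type lets you pass freely between the expected-count and conditional-probability formulations.
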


\subsection{Main result}
To state the main result of this section requires notation about partitions, much of it standard \cite{Mac}. Let $\lambda$ be a partition of some nonnegative integer $|\lambda|$ into parts $\lambda_1 \geq \lambda_2 \geq \cdots$.
The symbol $m_i(\lambda)$ will denote the number of parts of $\lambda$ of size $i$, and $\lambda'$ is the partition dual to $\lambda$ in the sense that $\lambda_i'=m_i(\lambda)+m_{i+1}(\lambda)+\cdots$. Let $n(\lambda)=\sum_i {\lambda_i' \choose 2}$. Let $l(\lambda)$ denote the number of parts of $\lambda$ and $o(\lambda)$ the number of odd parts of $\lambda$.

 It is often helpful to view partitions diagrammatically.
 The diagram associated to $\lambda$ is the set of ordered pairs $(i,j)$ of
 integers such that $1 \leq j \leq \lambda_i$.
 We use the convention that the row index $i$ increases as one
 goes downward and the column index $j$ increases as one goes across.
 So the diagram of the partition $(5,4,4,1)$ is
 \[ \begin{array}{c c c c c}
             \framebox{}& \framebox{}& \framebox{}& \framebox{} & \framebox{}  \
\\
     \framebox{}& \framebox{} & \framebox{} & \framebox{} &\\
     \framebox{}& \framebox{} & \framebox{} & \framebox{}& \\
             \framebox{} &&&&
       \end{array} \]
\newline
and one has that $n(\lambda)=15, l(\lambda)=4$, and $o(\lambda)=2$.

\begin{theorem} \label{con} Let $p^{\pm}(\lambda)$ denote the proportion of elements of $O^{\pm}_{2n}(q)$ which are unipotent and have
$GL_{2n}(q)$ rational canonical form of type $\lambda$.
\begin{enumerate}
\item $p^+(\lambda) + p^-(\lambda)$ is equal to $0$ unless $|\lambda|=2n$ and all odd parts of $\lambda$ have even multiplicity. If $|\lambda|=2n$ and all odd parts of $\lambda$ have even multiplicity, then \begin{eqnarray*} & & p^+(\lambda) + p^-(\lambda)\\ & = & \frac{q^{l(\lambda)}}{q^{n(\lambda)+\frac{|\lambda|}{2}+\frac{o(\lambda)}{2}} \prod_i (1-1/q^2) (1-1/q^4) \cdots (1-1/q^{2 \lfloor m_i(\lambda)/2 \rfloor})}.\end{eqnarray*}

\item $p^+(\lambda) - p^-(\lambda)$ is equal to $0$ unless $|\lambda|=2n$ and all parts of $\lambda$ have even multiplicity. If $|\lambda|=2n$ and all parts of $\lambda$ have even multiplicity, then \[ p^+(\lambda) - p^-(\lambda) = \frac{1}{q^{\sum (\lambda_i')^2/2} \prod_{i \geq 1} (1-1/q^2)(1-1/q^4) \cdots (1-1/q^{m_i(\lambda)})}.\]
\end{enumerate}
\end{theorem}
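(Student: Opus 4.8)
The plan is to compute the two proportions $p^+(\lambda)\pm p^-(\lambda)$ by relating them to character values, since these are exactly the objects we have already learned to evaluate at unipotent elements. Writing $g$ for a unipotent element with Jordan type $\lambda$, the quantity $p^+(\lambda)+p^-(\lambda)$ counts (as a proportion) pairs consisting of an orthogonal group element together with... better: I would express each $p^{\e}(\lambda)$ via the permutation character $\p^{\e}$ of $S = Sp_{2n}(q)$ acting on quadratic forms of type $\e$. Indeed, $\p^{\e}(g)$ is the number of $g$-invariant type-$\e$ quadratic forms polarized to $(\cdot,\cdot)$, which is precisely the number of ways $g$ embeds (up to the relevant stabilizer) into $O^{\e}_{2n}(q)$. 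The key bookkeeping identity I would establish is
\begin{equation*}
  p^{\e}(\lambda) = \frac{1}{|O^{\e}_{2n}(q)|}\cdot\frac{|C_{GL_{2n}(q)}(g)|^{-1}|GL_{2n}(q)|\cdot(\text{number fixing a type-}\e\text{ form})}{\cdots},
\end{equation*}
i.e. the number of unipotent elements of $O^{\e}_{2n}(q)$ of type $\lambda$ equals the number of $g$-invariant type-$\e$ forms summed over the $S$-class of $g$, divided appropriately by centralizer orders. Cleanly, the count of type-$\lambda$ unipotent elements in $O^{\e}_{2n}(q)$ is $\sum_{g} \p^{\e}(g)$ weighted by class sizes, which reduces to evaluating $\p^+\pm\p^-$ at unipotent $g$.

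The two halves then split naturally. For the difference, $\p^+ - \p^- = \bln$ in the notation of (\ref{for-p}), so $p^+(\lambda)-p^-(\lambda)$ is governed entirely by $\bLn(g)=\Ind^{GL_{2n}(q)}_{Sp_{2n}(q)}(\bln)(g)$, which Theorem \ref{induced} computes explicitly. I would substitute the formula from Theorem \ref{induced} and rewrite the centralizer and exponent combinatorially: using $c_i = m_i(\lambda)$, the identities $\sum_{i<j} i c_i c_j + \frac12\sum_i (i-1)c_i^2 = \sum_i \binom{\lambda_i'}{2} = n(\lambda)$ type manipulations, together with $|C_{GL_{2n}(q)}(g)| = q^{\sum_i (\lambda_i')^2}\prod_i \prod_{k=1}^{m_i}(1-q^{-k})$ (the standard $GL$ unipotent centralizer order), should collapse everything to the stated closed form $1/\big(q^{\sum(\lambda_i')^2/2}\prod_{i}(1-q^{-2})\cdots(1-q^{-m_i})\big)$. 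The vanishing unless all $c_i$ are even comes directly from the last clause of Theorem \ref{induced}.

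For the sum $p^+(\lambda)+p^-(\lambda)$, I would instead use $\p^+ + \p^- = \taun$, the permutation character on vectors, so the sum is controlled by $\Ind^{GL_{2n}(q)}_{Sp_{2n}(q)}(\taun)$ evaluated at $g$, where $\taun(g)=q^{\dim\Ker(g-1)}=q^{o(\lambda)/2+\lfloor\cdots\rfloor}$ — more precisely by the value of $\taun$ given in Lemma \ref{homog} and the surrounding discussion ($\taun(g)=q^k$ for homogeneous blocks, multiplying across blocks via a decomposition argument as in Lemma \ref{mult2}). Here the constraint is weaker: only odd parts need even multiplicity (reflecting that a single odd Jordan block can still support an invariant form over $\F_q$, whereas the signed difference requires pairing). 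The floor functions $\lfloor m_i(\lambda)/2\rfloor$ in the answer signal that I will need the analogue of Theorem \ref{induced} for the induced character of $\taun$, which the excerpt does not state outright; I expect the main obstacle to be carrying out this induction computation for $\taun$ and correctly matching the resulting centralizer bookkeeping to the floor-valued exponents. The remaining work is then purely the partition-statistic algebra translating the representation-theoretic exponents ($\dim R_u(C_\GC(g))$, the $GL$ centralizer order, and the symplectic factors $|Sp_{c_i}(q)|$) into the combinatorial quantities $n(\lambda), l(\lambda), o(\lambda), |\lambda|$; this is routine but must be done carefully so that the $q$-powers and the products $(1-q^{-2j})$ align exactly with the claimed formulas.
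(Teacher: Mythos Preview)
Your strategy is the paper's: for part~(2) it is identical (use $\bln = \p^{+}-\p^{-}$, invoke Theorem~\ref{induced}, then do the partition algebra), and for part~(1) you correctly identify $\p^{+}+\p^{-} = \taun$ and the need to induce from $Sp_{2n}(q)$ to $GL_{2n}(q)$. The one point worth correcting is the step you flag as the ``main obstacle,'' which is in fact trivial.

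For a unipotent $g$ of Jordan type $\lambda$ one has $d(g) = \dim\Ker(g-1) = l(\lambda)$, the number of Jordan blocks (i.e.\ the number of parts of $\lambda$), not an expression involving $o(\lambda)/2$. Since $q^{d(\cdot)}$ is therefore constant on each $GL_{2n}(q)$-conjugacy class, the induced-character formula collapses immediately:
\[
  \Ind_{Sp_{2n}(q)}^{GL_{2n}(q)}\bigl[q^{d(\cdot)}\bigr](g)
  \;=\; q^{l(\lambda)}\cdot |C_{GL_{2n}(q)}(g)|\cdot \frac{|C\cap Sp_{2n}(q)|}{|Sp_{2n}(q)|},
\]
where $C$ is the $GL_{2n}(q)$-class of $g$. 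No analogue of the delicate class-splitting analysis behind Theorem~\ref{induced} is needed. The last factor---the proportion of symplectic elements with a prescribed $GL$-rational canonical form---is already computed in even characteristic in \cite[Theorem~5.2]{FG1}, and that is precisely where the floors $\lfloor m_i(\lambda)/2\rfloor$ and the constraint that odd parts have even multiplicity originate. Substituting that formula and dividing by $|C_{GL_{2n}(q)}(g)|$ gives part~(1) directly; the remaining work is exactly the routine partition identities you describe.
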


\begin{proof} Let $g$ be a unipotent element of $GL_{2n}(q)$ of type
$\lambda$, and let $d(g)$ be the dimension of the kernel of $g-1$.
From the above discussion and denoting the principal character by
$[1]$, it follows that
\begin{eqnarray*}  & & \Ind_{O^{+}_{2n}(q)}^{Sp_{2n}(q)}[1](g) +
\Ind_{O^{-}_{2n}(q)}^{Sp_{2n}(q)}[1](g)\\
 & = & 1 + \ran(g) + \rbn(g) + 1 + 2 \left[ \sum^{(q-2)/2}_{i=1}\taui \right] (g) \\
& = & \frac{q^{d(g)}-1}{q-1} + 1 + 2 \left[ \sum^{(q-2)/2}_{i=1}\taui \right] (g) \\
& = & \frac{q^{d(g)}-1}{q-1} + 1 + (q-2) \frac{q^{d(g)}-1}{q-1} \\
& = & q^{d(g)}. \end{eqnarray*} The first equality used formula \eqref{for-p} in Subsection \ref{permchar}, the second equality used the fact that $[1]+\ran + \rbn$ is the permutation character of $Sp_{2n}(q)$ on lines, and the third equality used formula \eqref{val} in Subsection \ref{permchar}.

Let $C$ denote the $GL_{2n}(q)$ class of $g$ and let $D_i$ denote the $Sp_{2n}(q)$ conjugacy classes into which $C \cap Sp_{2n}(q)$ splits. Using the fact that $q^{d(\cdot)}$ is constant on conjugacy classes of $GL_{2n}(q)$, it follows by the general formula for induced characters (page 34 of \cite{FH}) that
\[ \Ind_{Sp_{2n}(q)}^{GL_{2n}(q)}[q^{d(\cdot)}](g)  =  q^{d(g)} |C_{GL_{2n}(q)}(g)| \frac{|C \cap Sp_{2n}(q)|}{|Sp_{2n}(q)|},\] where $C_{GL_{2n}(q)}(g)$ denotes the centralizer of $g$ in $GL_{2n}(q)$. A formula for $\frac{|C \cap Sp_{2n}(q)|}{|Sp_{2n}(q)|}$ in even characteristic appears in Theorem 5.2 of \cite{FG1}; using this and transitivity of induction, one concludes that
\begin{eqnarray*} \label{arr}
& & \Ind_{O^{+}_{2n}(q)}^{GL_{2n}(q)}[1](g) + \Ind_{O^{-}_{2n}(q)}^{GL_{2n}(q)}[1](g)\\
& = & \Ind_{Sp_{2n}(q)}^{GL_{2n}(q)}[q^{d(\cdot)}](g)\\
& = & q^{d(g)} |C_{GL_{2n}(q)}(g)|\frac{|C \cap Sp_{2n}(q)|}{|Sp_{2n}(q)|} \\
& = & \frac{|C_{GL_{2n}(q)}(g)| \cdot q^{l(\lambda)}}{q^{n(\lambda)+n+o(\lambda)/2} \prod_i (1-1/q^2) (1-1/q^4) \cdots (1-1/q^{2 \lfloor m_i(\lambda)/2 \rfloor})}.
\end{eqnarray*} Again using the general formula for induced characters, one has that
\[ p^+(\lambda) + p^-(\lambda) = \frac{1}{|C_{GL_{2n}(q)}(g)|} \left[ \Ind_{O^+_{2n}(q)}^{GL_{2n}(q)}[1](g) + \Ind_{O^-_{2n}(q)}^{GL_{2n}(q)}[1](g) \right], \] and part 1 follows.

By the general formula for induced characters and Theorem \ref{induced}, one concludes that
\begin{eqnarray*}
& & p^+(\lambda) - p^-(\lambda)\\
 & = & \frac{1}{|C_{GL_{2n}(q)}(g)|} \left[ \Ind_{O^+_{2n}(q)}^{GL_{2n}(q)}[1](g) - \Ind_{O^-_{2n}(q)}^{GL_{2n}(q)}[1](g) \right] \\
& = & \frac{\bLn(g)}{|C_{GL_{2n}(q)}(g)|} \\
& = & \frac{q^{\frac{1}{2} \sum_{i \geq 1} m_i(\lambda) - \sum_{i<j} i m_i(\lambda) m_j(\lambda) - \frac{1}{2} \sum_i (i-1) m_i(\lambda)^2}}{\prod_i |Sp_{m_i(\lambda)}(q)|} \\
& = & \frac{1}{q^{\frac{1}{2} \sum_i i m_i(\lambda)^2 + \sum_{i<j} im_i(\lambda)m_j(\lambda)} \prod_{i \geq 1} (1-1/q^2)(1-1/q^4) \cdots (1-1/q^{m_i(\lambda)})}.
\end{eqnarray*} Since $\lambda_j'= m_j(\lambda)+m_{j+1}(\lambda)+ \cdots$, one checks that  \[ \sum_j (\lambda_j')^2 = \sum_j [jm_j(\lambda) + 2 \sum_{i<j} im_i(\lambda)] m_j(\lambda),\] which completes the proof.
\end{proof}

{\it Remark:} Comparing the expression in part 2 with the formula for centralizer sizes in $GL_n(q^2)$ (page 181 of \cite{Mac}), one
concludes that when all $m_i(\lambda)$ are even, $p^+(\lambda)-p^-(\lambda)$ is equal to the proportion of elements in $GL_n(q^2)$ which are unipotent and have part $i$ occur with multiplicity $m_i(\lambda)/2$.

\section{Orthogonal groups in odd dimensions} \label{odddim}
Let $q$ be a power of $2$ as above. It is well known that
$O_{2n+1}(q) \simeq Sp_{2n}(q)$, and this isomorphism can be realized as
follows. Let $U = \F_{q}^{2n+1}$ be endowed with a non-degenerate quadratic
form $Q$. Since $\dim U$ is odd, the associated symplectic form
$(\cdot,\cdot)$ has a $1$-dimensional radical
$J = \langle v \rangle_{\F_{q}}$, with $Q(v) \neq 0$
and $g(v) = v$ for all $g \in O(Q)$. Then $O(Q)$ preserves
the non-degenerate symplectic form on $W := U/J = \F_{q}^{2n}$ induced by
$(\cdot,\cdot)$, and this action induces the isomorphism
$O_{2n+1}(q) = O(Q) \simeq Sp(W) = Sp_{2n}(q)$. Even though
the $O(Q)$-module $U$ is indecomposable, we will show that it is
easy to relate the Jordan canonical form of any element
$g \in O(Q)$ in $GL(U)$ and $GL(W)$. Let $J_{k}(\al)$ denote the
$k \times k$ Jordan block with eigenvalue $\al \in \overline{\F}_{q}$.

\begin{lemma}\label{sp-O}
{Keep the above notation and let $g \in O(Q)$. Then the Jordan canonical
form for $g$ in $GL(U \otimes \overline{\F}_{q})$ is just the direct sum
of the Jordan canonical form for $g$ in
$GL(W \otimes \overline{\F}_{q})$ and the Jordan block $J_{1}(1)$.}
\end{lemma}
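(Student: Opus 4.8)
The plan is to analyze the action of $g$ on $U = \F_q^{2n+1}$ by relating it to its action on the quotient $W = U/J$, where $J = \langle v \rangle_{\F_q}$ is the radical of the symplectic form. Since $g$ fixes $v$ (as noted, $g(v) = v$ for all $g \in O(Q)$), the line $J$ is a $g$-invariant subspace, contributing exactly one Jordan block $J_1(1)$ to the Jordan form of $g$ on $U$. The quotient action of $g$ on $W = U/J$ is by definition the element corresponding to $g$ under the isomorphism $O(Q) \simeq Sp(W)$. So the content of the lemma is that passing to the quotient by the $g$-fixed line $J$ simply deletes a single trivial Jordan block, leaving all other Jordan block data unchanged.

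First I would set up the short exact sequence of $\langle g \rangle$-modules
\[
  0 \longrightarrow J \longrightarrow U \longrightarrow W \longrightarrow 0,
\]
in which $\langle g \rangle$ acts trivially on $J$. The key observation is that the only eigenvalue of a \emph{unipotent} element is $1$; since every $g \in O(Q)$ in characteristic $2$ that we care about is unipotent (and more generally one may reduce to the unipotent part, as the Jordan form behaves blockwise over $\overline{\F}_q$), the entire discussion concerns Jordan blocks $J_k(1)$. Over $\overline{\F}_q$ I would compare the Jordan block multiplicities on $U$ and on $W$ by examining the ranks of $(g-1)^k$, equivalently the dimensions $\dim \ker (g-1)^k$. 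The number of Jordan blocks $J_m(1)$ with $m \geq k$ equals $\dim \ker(g-1)^k - \dim \ker(g-1)^{k-1}$ for each module, so it suffices to track how these kernel dimensions change when one passes from $U$ to $W$.

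The crux is therefore to show that for each $k \geq 1$,
\[
  \dim \ker (g-1)^k\big|_U = \dim \ker (g-1)^k\big|_W + 1.
\]
Because $v \in \ker(g-1) \subseteq \ker(g-1)^k$ for every $k \geq 1$, the line $J$ always lies in the kernel on $U$; the heart of the matter is proving that this accounts for exactly the ``extra $+1$'' at every level, i.e.\ that the induced map on kernels $\ker(g-1)^k|_U \to \ker(g-1)^k|_W$ is surjective with one-dimensional kernel $J$. Surjectivity is the point that requires the quadratic-form structure: given $\bar w \in \ker(g-1)^k|_W$, I would lift it to $u \in U$ and show that $u$ can be adjusted by an element of $J$ so that $(g-1)^k u = 0$ exactly, using that $(g-1)^k u \in J$ together with the fact that $g$ acts trivially on $J$. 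The main obstacle is verifying this lifting precisely at every level $k$ simultaneously, which amounts to showing that the single trivial Jordan block coming from $J$ does not ``merge'' with or lengthen any Jordan block of the quotient action. I expect this to follow cleanly from the indecomposability structure and the explicit description of $g$ on $U$ via Spaltenstein's / Wall's normal-form conventions already invoked in the earlier subsections, so the argument should reduce to a short rank computation rather than anything delicate.
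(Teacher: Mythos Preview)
Your overall strategy matches the paper's: compare $\dim \ker(g-1)^{k}$ on $U$ and on $W$ and read off the Jordan block multiplicities from the second differences. The gap is in the surjectivity step, which is where the content actually lies.

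Your proposed mechanism---lift $\bar w$ to $u$ and then ``adjust $u$ by an element of $J$''---cannot work: since $(g-1)v = 0$, replacing $u$ by $u+cv$ leaves $(g-1)^{k}u$ unchanged. So no adjustment is available; you must show $(g-1)^{k}u = 0$ outright. You correctly sense that the quadratic form is needed here, but you do not say how, and the fallback appeal to Spaltenstein/Wall normal forms is not legitimate in this context: those are developed for $Sp$ and even-dimensional $O$, and the whole purpose of the lemma is to pass from odd-dimensional $O$ to $Sp$.

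The missing idea, which the paper supplies, is to use $g$-invariance of $Q$ at the last step before landing in $J$. Set $w := (g-1)^{k-1}u$; then $w + J \in \ker(g-1)|_{W}$, so $g(w) = w + av$ for some scalar $a$. Now
\[
Q(w) \;=\; Q(g(w)) \;=\; Q(w+av) \;=\; Q(w) + a^{2}Q(v),
\]
using $(v,U)=0$. Since $Q(v) \neq 0$ this forces $a=0$, hence $(g-1)^{k}u = 0$. Thus $\ker(g-1)^{k}|_{U} \to \ker(g-1)^{k}|_{W}$ is surjective with kernel $J$, i.e.\ $W_{k}(1) = U_{k}(1)/J$. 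This is indeed the ``short rank computation'' you anticipated, but it hinges on $Q(v)\neq 0$, not on normal-form machinery.

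A minor point: the lemma is stated for arbitrary $g \in O(Q)$, not only unipotent ones, so the eigenvalues $\alpha \neq 1$ need a sentence. Since $(g-\alpha)$ is injective on $J$ for $\alpha \neq 1$, one gets $\dim U_{j}(\alpha) = \dim W_{j}(\alpha)$ directly, and $a_{j}(\alpha) = b_{j}(\alpha)$ follows.
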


\begin{proof}
To simplify the notation, we will extend the scalars to $\overline{\F}_{q}$
and denote the corresponding spaces also by $U$, $J$, and $W$. Let
$\sum_{i,\al}a_{i}(\al)J_{i}(\al)$, respectively
$\sum_{i,\al}b_{i}(\al)J_{i}(\al)$, be the Jordan canonical form for
$g$ in $GL(U)$, respectively in $GL(W)$. We need to show that
$a_{i}(\al) - b_{i}(\al)$ equals $1$ if $(i,\al) = (1,1)$ and $0$ otherwise.
For any $j \geq 0$ and $\al \in \overline{\F}_{q}$, let
$$U_{j}(\al) = \{ x \in U \mid (g- \alpha \cdot 1_U)^{j}(x) = 0\}$$
and similarly for
$W_{j}(\al)$; also set $U_{0}(\al) = 0$ and $W_{0}(\al) = 0$. Then it is easy
to check for $j \geq 1$ that
$\dim U_{j}(\al) = \sum_{i} \min\{i,j\} \cdot a_{i}(\al)$. It follows that
$\dim U_{j+1}(\al)/U_{j}(\al) = \sum_{i \geq j+1}a_{i}(\al)$, and so
$$a_{j}(\al) = -(\dim U_{j+1}(\al)) + 2(\dim U_{j}(\al)) - (\dim U_{j-1}(\al)),
$$
and similarly
$$b_{j}(\al) = -(\dim W_{j+1}(\al)) + 2(\dim W_{j}(\al)) - (\dim W_{j-1}(\al)).
$$
Since $g$ acts trivially on $J$, it is straightforward to check that
$\dim U_{j}(\al) = \dim W_{j}(\al)$ for $\al \neq 1$, whence
$a_{j}(\al) = b_{j}(\al)$ for any such $\al$. Let $j \geq 1$ and
$u+J \in W_{j}(1)$. Denoting $w := (g-1)^{j-1}(u)$, we have
$w+J \in  W_{1}(1)$, i.e. $g(w) = w +av$ for some $a \in \overline{\F}_{q}$.
But then $Q(w) = Q(w+av) = Q(w)+a^{2}Q(v)$ (since $(v,U) = 0$), and so
$a = 0$ as $Q(v) \neq 0$. Thus $(g-1)^{j}(u) = (g-1)w = 0$, i.e.
$u \in U_{j}(1)$. We conclude that $W_{j}(1) = U_{j}(1)/J$, and so
$\dim U_{j}(1) = 1+\dim W_{j}(1)$ for $j \geq 1$. Recall that
$U_{0}(1) = W_{0}(1) = 0$. Hence $a_{i}(1) - b_{i}(1)$ equals $1$ if
$i = 1$ and $0$ otherwise.
\end{proof}

Cycle indices for even characteristic symplectic groups were derived in \cite{FG1},
and Lemma \ref{sp-O} reduces the cycle index of $O^{\pm}(2n+1,q)$ to that of $Sp(2n,q)$. Thus we can
focus attention on $O^{\pm}(2n,q)$, which we do in the next section.

\section{Cycle index} \label{cycleindex}
Recall that we are interested in the $GL_{2n}(q)$ rational canonical form of random elements of $O^{\pm}_{2n}(q)$.
The rational canonical forms of $GL_{2n}(q)$ are parameterized by associating a partition $\lambda_{\phi}$ to each monic, non-constant irreducible polynomial $\phi$ over the finite field $\mathbb{F}_q$, such that
\begin{enumerate}
\item $|\lambda_z|=0$
\item $\sum_{\phi} |\lambda_{\phi}| \cdot \deg(\phi)=2n$.
\end{enumerate} Here $\deg(\phi)$ denotes the degree of $\phi$, and $|\lambda_{\phi}|$ is the size of $\lambda_{\phi}$.
For elements in $O^{\pm}_{2n}(q)$, it follows from Wall \cite{W1} that there are additional restrictions:
\begin{enumerate}
\item $\lambda_{\phi}=\lambda_{\phi^*}$, where $\phi^*(z)=\phi(0)^{-1}z^n \phi(z^{-1})$.
\item The odd parts of $\lambda_{z-1}$ occur with even multiplicity.
\end{enumerate}

For the remainder of this section, we use the notation:
\[ A(\phi,\lambda_{\phi},i) =
\left\{
\begin{array}{ll} |U_{m_i(\lambda_{\phi})}(q^{\deg(\phi)/2})| & \ if \ \phi=\phi^* \\
|GL_{m_i(\lambda_{\phi})}(q^{\deg(\phi)})|^{1/2} & \ if \ \phi \neq \phi^*
\end{array} \right.\]
We remind the reader that $|GL_{n}(q)|=q^{n^2}(1-1/q)\cdots (1-1/q^n)$ and that the size of $U_{n}(q)$ is $(-1)^n |GL_n(-q)|$. For $\phi \neq z-1$, we define
$B(\phi,\lambda_{\phi})$ as
\[
q^{\deg(\phi)[\sum_{h<i} hm_h(\lambda_{\phi})m_i(\lambda_{\phi})+\frac{1}{2}
\sum_i (i-1)m_i(\lambda_{\phi})^2]} \prod_i A(\phi,\lambda_{\phi},i). \]

Next we give an explicit formula for the cycle index of the orthogonal groups in even characteristic.
We let $x_{\phi,\lambda}$ be variables, and $\lfloor y \rfloor$ denote the largest integer not exceeding
$y$.

\begin{theorem} \label{cycindex}
The following statements hold.

\begin{enumerate}
\item
\begin{eqnarray*} & & 1+\sum_{n \geq 1}
\frac{u^{2n}}{|O^+_{2n}(q)|} \sum_{g \in O^+_{2n}(q)} \prod_{\phi}
x_{\phi,\lambda_{\phi}(g)} + \sum_{n \geq 1}
\frac{u^{2n}}{|O^-_{2n}(q)|} \sum_{g \in O^-_{2n}(q)} \prod_{\phi}
x_{\phi,\lambda_{\phi}(g)} \\
& = & \left( \sum_{|\lambda| \
even \atop i \ odd \Rightarrow m_i \ even} \frac{ x_{z-1,\lambda}
u^{|\lambda|} q^{l(\lambda)} }{q^{n(\lambda)+\frac{|\lambda|}{2}+\frac{o(\lambda)}{2}}
\prod_i (1-1/q^2) \cdots (1-1/q^{2 \lfloor \frac{m_i(\lambda)}{2}
\rfloor})} \right)\\
& & \cdot \prod_{\phi = {\phi^*} \atop \phi
\neq z-1} \left( \sum_{\lambda} \frac{x_{\phi,\lambda}
u^{|\lambda| \cdot \deg(\phi)}}{B(\phi,\lambda)} \right)
\prod_{\{\phi,\phi^*\} \atop \phi \neq \phi^*} \left( \sum_{\lambda}
\frac{x_{\phi,\lambda}x_{\phi^*,\lambda} u^{2|\lambda| \cdot
\deg(\phi)}}{B(\phi,\lambda) B(\phi^*,\lambda)} \right)
\end{eqnarray*}

\item
\begin{eqnarray*} & & 1+\sum_{n \geq 1}
\frac{u^{2n}}{|O^+_{2n}(q)|} \sum_{g \in O^+_{2n}(q)} \prod_{\phi}
x_{\phi,\lambda_{\phi}(g)} - \sum_{n \geq 1}
\frac{u^{2n}}{|O^-_{2n}(q)|} \sum_{g \in O^-_{2n}(q)} \prod_{\phi}
x_{\phi,\lambda_{\phi}(g)}\\
& = & \left( \sum_{\lambda \atop all \ m_i(\lambda) \ even}
\frac{ x_{z-1,\lambda} u^{|\lambda|}}{q^{\sum (\lambda_i')^2/2} \prod_{i \geq 1} (1-1/q^2)(1-1/q^4) \cdots (1-1/q^{m_i(\lambda)})          } \right)\\
& & \cdot \prod_{\phi =
\phi^* \atop \phi \neq z - 1} \left( \sum_{\lambda}
\frac{x_{\phi,\lambda} (-1)^{|\lambda|} u^{|\lambda| \cdot \deg(\phi)}}{B(\phi,\lambda)} \right)
\prod_{\{\phi,\phi^*\} \atop \phi \neq \phi^*} \left( \sum_{\lambda}
\frac{x_{\phi,\lambda}x_{\phi^*,\lambda} u^{2|\lambda| \cdot \deg(\phi)}}{B(\phi,\lambda)
B(\phi^*,\lambda)} \right)
\end{eqnarray*}
\end{enumerate}
\end{theorem}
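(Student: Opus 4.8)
The plan is to interpret each side as a sum, over all $GL_{2n}(q)$ rational canonical forms that can occur in an orthogonal group, of a weight that factors completely over the primary decomposition of the element, and then to identify the resulting Euler factors with those on the right-hand side. Set
\[ Z^{\e} = \sum_{n \geq 0} \frac{u^{2n}}{|O^{\e}_{2n}(q)|} \sum_{g \in O^{\e}_{2n}(q)} \prod_{\phi} x_{\phi,\lambda_{\phi}(g)}, \]
with the convention that the $n=0$ term contributes $1$ to $Z^{+}$ (the zero space having type $+$) and nothing to $Z^{-}$; then the left-hand sides of (1) and (2) are exactly $Z^{+}+Z^{-}$ and $Z^{+}-Z^{-}$. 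The basic structural input is that for $g \in O^{\e}_{2n}(q)$ the space $V$ splits as an orthogonal direct sum of primary components
\[ V = V_{z-1} \oplus \bigoplus_{\phi=\phi^{*},\, \phi \neq z-1} V_{\phi} \oplus \bigoplus_{\{\phi,\phi^{*}\},\, \phi \neq \phi^{*}} (V_{\phi} \oplus V_{\phi^{*}}), \]
in which $V_{\phi} \perp V_{\psi}$ unless $\psi = \phi^{*}$; each self-dual block carries an induced orthogonal form of some type $\e_{\phi}$, while a paired block $V_{\phi} \oplus V_{\phi^{*}}$ is hyperbolic and so has type $+$. Since the orthogonal type is multiplicative over orthogonal direct sums, the global type is $\e = \prod_{\phi} \e_{\phi}$.

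The weight of a rational canonical form, namely $\big(\prod_{\phi} x_{\phi,\lambda_{\phi}}\big)\cdot |C \cap O^{\e}_{2n}(q)|/|O^{\e}_{2n}(q)|$ (where $C$ is the ambient $GL_{2n}(q)$-class), factors as a product of local weights, one per block, by Wall's centralizer and class-size formulas \cite{W1}. The only coupling between blocks inside a single $Z^{\e}$ is therefore the global constraint $\prod_{\phi}\e_{\phi}=\e$. For this reason I would not try to factor $Z^{+}$ or $Z^{-}$ individually, but instead pass to $Z^{+}+Z^{-}$ and $Z^{+}-Z^{-}$: summing over both types removes the constraint, so $Z^{+}+Z^{-}$ becomes a free sum over all local data and factors as a product of local generating functions, while the difference inserts the multiplicative sign $\prod_{\phi}\e_{\phi}$, which again factors as a product of sign-weighted local factors. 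This is precisely why (1) and (2) share the same paired factors but differ in the $z-1$ and self-dual factors.

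It then remains to identify the three local factors. For the $\phi=z-1$ block, summing the local weight over $\lambda$ gives $\sum_{\lambda} x_{z-1,\lambda}u^{|\lambda|}\big(p^{+}(\lambda)+p^{-}(\lambda)\big)$-type and $\big(p^{+}(\lambda)-p^{-}(\lambda)\big)$-type contributions, which are exactly the two stated $z-1$ factors by Theorem \ref{con} (the support conditions on $\lambda$ and the $(1-1/q^{2j})$ products being read off from parts (1) and (2) of that theorem). For a paired block the centralizer is $\prod_{i}GL_{m_{i}(\lambda)}(q^{\deg\phi})$ and the type is always $+$, yielding the factor built from $B(\phi,\lambda)B(\phi^{*},\lambda)$ in both identities. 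For a self-dual $\phi \neq z-1$ the centralizer block is a product of unitary groups $U_{m_{i}(\lambda)}(q^{\deg\phi/2})$, producing the factor with $A$ built from $|U_{m}|=(-1)^{m}|GL_{m}(-q)|$; here one checks that $\e_{\phi}=(-1)^{|\lambda|}$, which is exactly the extra sign appearing in (2) and absent in (1). All centralizer orders and class sizes used here come from \cite{W1} together with the proportion formula of Theorem 5.2 of \cite{FG1} already invoked in the proof of Theorem \ref{con}.

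Finally, the assembly is the usual Euler-product step: writing $u^{2n}$ and distributing $2n=\sum_{\phi}|\lambda_{\phi}|\deg\phi$ across the blocks turns the constrained sum over rational canonical forms of fixed dimension into an unconstrained product of the local generating functions computed above, and the two identities follow by matching factors. The routine part is the algebraic simplification of the centralizer orders into the stated $B(\phi,\lambda)$ and $(1-1/q^{2j})$ products. The main obstacle, and the step deserving the most care, is the sign bookkeeping: verifying that the orthogonal type is genuinely multiplicative over the primary components, that paired blocks contribute type $+$, and that a self-dual nontrivial block with partition $\lambda$ contributes type $(-1)^{|\lambda|}$. This last identification is what ties Wall's data to the unitary factors $|U_{m}|$ and to the sign in part (2); once it and the multiplicativity of the type are secured, taking the sum and the difference of $Z^{+}$ and $Z^{-}$ completes both parts.
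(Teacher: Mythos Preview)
Your proposal is correct and follows essentially the same route as the paper: compare coefficients of $u^{2n}\prod_\phi x_{\phi,\lambda_\phi}$, use Wall's Theorem~3.7.4 in \cite{W1} for the factorization of the proportion over primary components $\phi\neq z-1$, and plug in Theorem~\ref{con} for the $z-1$ factor. The paper compresses all of this into two sentences, simply citing \cite{W1} for the non-unipotent blocks, whereas you have written out the structural mechanism (orthogonal primary decomposition, multiplicativity of type, and the $\e_\phi=(-1)^{|\lambda|}$ identification for self-dual $\phi\neq z-1$) that justifies why the sum and difference $Z^{+}\pm Z^{-}$ decouple into Euler products; but the underlying argument and the inputs used are identical.
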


\begin{proof} Consider the first part. The coefficient of $u^{2n} \prod_{\phi} x_{\phi,\lambda_{\phi}}$ on the left-hand side is the sum of the proportions of elements in $O^{\pm}_{2n}(q)$ with rational canonical form data $\{\lambda_{\phi}\}$ in $GL_{2n}(q)$. By Theorem 3.7.4 of \cite{W1} and part 1 of Theorem \ref{con}, this is equal to the coefficient of $u^{2n} \prod_{\phi} x_{\phi,\lambda_{\phi}}$ on the right-hand side, yielding the first assertion. The second assertion is proved similarly, using part 2 of Theorem \ref{con}.
\end{proof}

{\it Remark}: $\Omega^{\pm}_{2n}(q)$ is defined as the index 2 subgroup of $O^{\pm}_{2n}(q)$ with the property that $l(\lambda_{z-1}(g))$ is even. Thus our techniques can be easily modified to study these groups as well. Namely in the left-hand side of Theorem \ref{cycindex}, one replaces the sum over $g \in O^{\pm}_{2n}(g)$ by the sum over $g \in \Omega^{\pm}_{2n}(q)$ (but leaving the $|O^{\pm}_{2n}(q)|$ unchanged), and in the right hand terms adds the additional restriction that the number of parts of $\lambda_{z-1}$ is even.

\section{Enumerative applications} \label{enumerate}

In this section we present a small sample of enumerative applications of the cycle indices from Section \ref{cycleindex}.

\subsection{Example 1: Enumeration of elements by dimension of fixed space}

Let $p_{2n}^{\pm}(k)$ denote the probability that an element of $O^{\pm}_{2n}(q)$ has a fixed space of dimension $k$.
These probabilities were computed in a long paper of Rudvalis and Shinoda \cite{RS} using Moebius inversion. Theorem \ref{fixdim} shows how to compute these probabilities using cycle indices.

\begin{theorem} \label{fixdim} (\cite{RS})
The following statements hold.
\begin{enumerate}
\item  \begin{eqnarray*} p_{2n}^{\pm}(2k) & = & \frac{q^k}{2|GL_k(q^2)|} \sum_{j=0}^{n-k} \frac{(-1)^j}{q^{(2k-1)j} (q^{2j}-1) \cdots (q^4-1)(q^2-1)}\\ & & \pm \frac{1}{2} \frac{(-1)^{n-k}}{q^{2k(n-k)} |GL_k(q^2)| (q^{2(n-k)}-1) \cdots (q^4-1)(q^2-1)}. \end{eqnarray*}

\item $p_{2n}^{\pm}(2k+1) = \frac{1}{2 q^k |GL_k(q^2)|} \sum_{j=0}^{n-k-1} \frac{(-1)^j}{q^{j^2+2(k+1)j} (1-1/q^2)(1-1/q^4) \cdots (1-1/q^{2j})}$.
\end{enumerate}
\end{theorem}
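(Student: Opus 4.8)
The fixed space of $g$ is $\ker(g-1)$, whose dimension is the number of unipotent Jordan blocks, i.e. the number of parts $l(\lambda_{z-1}(g))$ of the partition attached to $z-1$. So the plan is to feed the cycle index of Theorem \ref{cycindex} a specialization that records this number: set $x_{\phi,\lambda}=1$ for every $\phi\neq z-1$ and $x_{z-1,\lambda}=t^{l(\lambda)}$ for a bookkeeping variable $t$. Then, for each sign, the coefficient of $u^{2n}t^{m}$ on the left-hand side of Theorem \ref{cycindex} is exactly $p^{\pm}_{2n}(m)$, the probability that an element of $O^{\pm}_{2n}(q)$ has an $m$-dimensional fixed space.

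I would compute the two halves $p^++p^-$ and $p^+-p^-$ separately, from parts (1) and (2) of Theorem \ref{cycindex}, and then recover $p^{\pm}=\frac12\big[(p^++p^-)\pm(p^+-p^-)\big]$. In part (2) every multiplicity $m_i(\lambda_{z-1})$ is forced to be even, hence $l(\lambda_{z-1})$ is even, so $p^+-p^-$ is supported only on even-dimensional fixed spaces. This is precisely why the even case $p^{\pm}_{2n}(2k)$ splits into a sign-independent summand (coming from $p^++p^-$) plus a signed summand (coming from $p^+-p^-$), while the odd case $p^{\pm}_{2n}(2k+1)$ carries no signed term.

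After the specialization each right-hand side factors as a generating function in $(t,u)$ coming from $z-1$, times a product over $\phi\neq z-1$ that no longer involves $t$. For part (1) this is the $t$-free Euler product $\prod_{\phi=\phi^*,\phi\neq z-1}(\cdots)\prod_{\{\phi,\phi^*\}}(\cdots)$, and for part (2) the self-dual factors acquire the extra sign $(-1)^{|\lambda|}$. I would evaluate these products in closed form using the standard identities behind the $GL_{2n}(q)$ and $Sp_{2n}(q)$ cycle indices (\cite{Kung}, \cite{St}, \cite{FG1}); the signed product is what ultimately produces the factor $(-1)^{n-k}$. For the $z-1$ factor I would use Theorem \ref{con}: in part (2) the Remark following it identifies this factor (after writing $m_i=2m_i'$) with the unipotent generating function of $GL_{n}(q^2)$ marked by fixed-space dimension, while in part (1) it is a sum over partitions whose odd parts have even multiplicity, weighted by $q^{l(\lambda)}/q^{n(\lambda)+|\lambda|/2+o(\lambda)/2}$ and a product denominator, and marked by $t^{l(\lambda)}$.

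Finally I would extract the coefficient of $t^{2k}$ or $t^{2k+1}$ and then of $u^{2n}$, simplifying with $q$-series manipulations (the $q$-binomial theorem and Euler's identities). I expect the main obstacle to be this last step: evaluating the non-unipotent Euler products explicitly and carrying out the convolution with the $z-1$ generating function so that the powers $q^{(2k-1)j}$, $q^{2k(n-k)}$ and the signs $(-1)^j$, $(-1)^{n-k}$ emerge correctly. In particular, the signed product in part (2) must be shown to telescope, after convolution, into the single closed-form term of the theorem, whereas the analogous computation in part (1) leaves the genuine finite alternating sum; and one must track the contribution of $o(\lambda)/2$ to the weight against the parity of $l(\lambda)$ in order to separate the even- and odd-dimensional cases cleanly.
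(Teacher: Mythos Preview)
Your proposal is correct and follows essentially the same approach as the paper: specialize the cycle index of Theorem \ref{cycindex} to track $l(\lambda_{z-1})$, compute $p^{+}+p^{-}$ and $p^{+}-p^{-}$ separately from parts (1) and (2), evaluate the Euler product over $\phi\neq z-1$ in closed form, and extract coefficients via Euler's identity. The paper carries this out by invoking Theorem~3 of \cite{F2} (together with \cite{FG1}) to collapse the $z-1$ sum in part~(1) to a single product, Lemma~1.3.17 of \cite{FNP} for the non-unipotent Euler products, and the Remark after Theorem~\ref{con} with Theorem~5 of \cite{F3} for part~(2); these are precisely the closed-form evaluations you flag as the main obstacle.
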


\begin{proof} By part 1 of Theorem \ref{cycindex}, $p_{2n}^+(2k)+p_{2n}^-(2k)$ is the coefficient of $u^{2n}$ in
\begin{eqnarray*}
& &  \sum_{l(\lambda)=2k \atop i \ odd \Rightarrow m_i \ even} \frac{
u^{|\lambda|} q^{2k}}{q^{n(\lambda)+\frac{|\lambda|}{2}+\frac{o(\lambda)}{2}}
\prod_i (1-1/q^2) (1-1/q^4) \cdots (1-1/q^{2 \lfloor \frac{m_i(\lambda)}{2}
\rfloor})} \\
& & \cdot \prod_{\phi = {\phi^*} \atop \phi
\neq z-1} \left( \sum_{\lambda} \frac{
u^{|\lambda| \cdot \deg(\phi)}}{B(\phi,\lambda)} \right)
\prod_{\{\phi,\phi^*\} \atop \phi \neq \phi^*} \left( \sum_{\lambda}
\frac{u^{2|\lambda| \cdot
\deg(\phi)}}{B(\phi,\lambda) B(\phi^*,\lambda)} \right) \\
& = & \frac{u^{2k} q^k}{q^{2k^2}(1-u^2/q)(1-1/q^2) \cdots (1-u^2/q^{2k-1})(1-1/q^{2k})} \\
& & \cdot \prod_{\phi = {\phi^*} \atop \phi
\neq z-1} \left( \sum_{\lambda} \frac{
u^{|\lambda| \cdot \deg(\phi)}}{B(\phi,\lambda)} \right)
\prod_{\{\phi,\phi^*\} \atop \phi \neq \phi^*} \left( \sum_{\lambda}
\frac{u^{2|\lambda| \cdot
\deg(\phi)}}{B(\phi,\lambda) B(\phi^*,\lambda)} \right) \\
& = & \frac{u^{2k} q^k}{q^{2k^2}(1-u^2/q)(1-1/q^2) \cdots (1-u^2/q^{2k-1})(1-1/q^{2k})} \\
& & \cdot \prod_{\phi = {\phi^*} \atop \phi
\neq z-1} \prod_{i \geq 1} \left(1+(-1)^i(u^2/q^{i})^{\deg(\phi)/2} \right)^{-1}
\prod_{\{\phi,\phi^*\} \atop \phi \neq \phi^*} \prod_{i \geq 1} \left( 1-(u^2/q^{i})^{\deg(\phi)} \right)^{-1}. \end{eqnarray*}
The first equality used Theorem 3 of \cite{F2}, together with the fact from \cite{FG1}
that the $\lambda_{z-1}$ part of an element of $Sp_{2n}(q)$ has the same behavior in odd and
even characteristic. The second equality used an identity from \cite{F1}. Using
Lemma 1.3.17 (parts a and d) of \cite{FNP}, this becomes \begin{eqnarray*}
& & \frac{u^{2k} q^k}{q^{2k^2}(1-u^2/q)(1-1/q^2) \cdots (1-u^2/q^{2k-1})(1-1/q^{2k})} \\
& & \cdot \left[ \frac{\prod_{i \geq 1} (1-u^2/q^{2i-1})}{1-u^2} \right] \\
& = & \frac{u^{2k} q^k \prod_{i \geq k+1} (1-u^2/q^{2i-1})}{(1-u^2) |GL_k(q^2)|}. \end{eqnarray*}

It now follows from an identity of Euler
(\cite{A}, p. 19) that \begin{equation} \label{2} p_{2n}^+(2k)+p_{2n}^-(2k) = \frac{q^k}{|GL_k(q^2)|} \sum_{j=0}^{n-k} \frac{(-1)^j}{q^{(2k-1)j} (q^{2j}-1) \cdots (q^2-1)}.\end{equation}

Part 2 of Theorem \ref{cycindex} gives that $p_{2n}^+(2k)-p_{2n}^-(2k)$ is the coefficient of $u^{2n}$ in
\[ \prod_{i \geq 1} (1-u^2/q^{2i}) \cdot \sum_{l(\lambda)=2k \atop all \ m_i(\lambda) \ even} \frac{u^{|\lambda|}}{q^{\sum (\lambda_i')^2/2} \prod_{i \geq 1} (1-1/q^2) \cdots (1-1/q^{m_i(\mu)})} .\] Here the term $\prod_{i \geq 1} (1-u^2/q^{2i})$ comes from the polynomials other than $z-1$
by an argument similar to that in the previous paragraph. By the remark after Theorem \ref{con} and Theorem 5 of \cite{F3}, this is the coefficient of $u^n$ in \[ \frac{u^k}{|GL_k(q^2)|} \frac{\prod_{i \geq 1} (1-u/q^{2i})}{\prod_{i=1}^k (1-u/q^{2i})} = \frac{u^k}{|GL_k(q^2)|} \prod_{i \geq k+1} (1-u/q^{2i}).\] It now follows from an identity of Euler (\cite{A}, p. 19) that \begin{equation} \label{3} p_{2n}^+(2k)-p_{2n}^-(2k) =
\frac{(-1)^{n-k}}{q^{2k(n-k)} |GL_k(q^2)| (q^{2(n-k)}-1) \cdots (q^4-1)(q^2-1)}.\end{equation} Combining equations \eqref{2} and \eqref{3} proves part 1 of the theorem.

For part 2 of the theorem, arguing as in part 1 of the theorem gives that $p_{2n}^+(2k+1)+p_{2n}^-(2k+1)$ is the coefficient of $u^{2n}$ in \begin{eqnarray*} & & \frac{u^{2k+2}}{q^{2k^2+k}(1-u^2/q)(1-1/q^2) \cdots (1-1/q^{2k})(1-u^2/q^{2k+1})}\\
 & & \cdot \frac{\prod_{i \geq 1} (1-u^2/q^{2i-1})}{1-u^2} \\
& = & \frac{u^{2k+2}}{q^{2k^2+k}(1-1/q^2) (1-1/q^4) \cdots (1-1/q^{2k})} \frac{\prod_{i \geq k+1} (1-u^2/q^{2i+1})}{1-u^2}.
\end{eqnarray*} Again using Euler's identity, this is equal to \[ \frac{1}{q^k |GL_k(q^2)|} \sum_{j=0}^{n-k-1} \frac{(-1)^j}{q^{j^2+2(k+1)j} (1-1/q^2)(1-1/q^4) \cdots (1-1/q^{2j})}.\] Part 2 of Theorem \ref{cycindex} implies that $p_{2n}^+(2k+1)-p_{2n}^-(2k+1)=0$ (if all parts of $\lambda_{z-1}$ occur with even multiplicity, the total number of parts can't be odd), and the result follows.
\end{proof}

Since $\Omega^{\pm}(2n,q)$ is the index 2 subgroup of $O^{\pm}(2n,q)$ consisting of elements with an even dimensional fixed space, the following corollary of Theorem \ref{fixdim} is immediate.

\begin{cor} \label{imm} Let $q_{2n}^{\pm}(k)$ be the probability that an element of $\Omega^{\pm}(2n,q)$ has a $k$-dimensional fixed space. Then
$q_{2n}^{\pm}(2k+1)=0$ and \begin{eqnarray*} q_{2n}^{\pm}(2k) & = & \frac{q^k}{|GL_k(q^2)|} \sum_{j=0}^{n-k} \frac{(-1)^j}{q^{(2k-1)j} (q^{2j}-1) \cdots (q^4-1)(q^2-1)}\\ & & \pm \frac{(-1)^{n-k}}{q^{2k(n-k)} |GL_k(q^2)| (q^{2(n-k)}-1) \cdots (q^4-1)(q^2-1)}. \end{eqnarray*} \end{cor}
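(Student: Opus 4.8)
The plan is to deduce this corollary directly from Theorem \ref{fixdim}, using nothing beyond the index-$2$ relationship between $\Omega^{\pm}(2n,q)$ and $O^{\pm}(2n,q)$. The decisive structural fact, recalled immediately before the statement, is that $\Omega^{\pm}(2n,q)$ is exactly the set of elements of $O^{\pm}(2n,q)$ whose fixed space has even dimension. Everything reduces to a bookkeeping step that converts proportions in $O^{\pm}$ into proportions in $\Omega^{\pm}$.

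First I would dispose of the odd case. Since every element of $\Omega^{\pm}(2n,q)$ has an even-dimensional fixed space, no element can have a $(2k+1)$-dimensional fixed space, so $q_{2n}^{\pm}(2k+1)=0$ at once. For the even case I would argue with counts rather than proportions. Write $N=|O^{\pm}_{2n}(q)|$, so that $|\Omega^{\pm}(2n,q)|=N/2$. The number of elements of $O^{\pm}_{2n}(q)$ with $2k$-dimensional fixed space is $p_{2n}^{\pm}(2k)\cdot N$, and because their fixed space is even-dimensional, all of these elements lie in $\Omega^{\pm}(2n,q)$. Dividing by $|\Omega^{\pm}(2n,q)|=N/2$ therefore yields $q_{2n}^{\pm}(2k)=2\,p_{2n}^{\pm}(2k)$.

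Finally I would substitute the expression for $p_{2n}^{\pm}(2k)$ supplied by part~1 of Theorem \ref{fixdim}. The factor of $2$ exactly cancels the two factors of $\tfrac12$ occurring there—one multiplying the sum over $j$ and one multiplying the $\pm$ term—reproducing the claimed formula verbatim, sign and all. There is no genuine obstacle, as the statement is indeed immediate; the only point deserving a moment's care is confirming that the doubling cancels both factors of $\tfrac12$ cleanly while preserving the $\pm$ sign in the second summand.
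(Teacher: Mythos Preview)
Your proposal is correct and matches the paper's approach exactly: the paper simply states that the corollary is immediate from Theorem \ref{fixdim} because $\Omega^{\pm}(2n,q)$ is the index-$2$ subgroup of $O^{\pm}(2n,q)$ consisting of elements with even-dimensional fixed space. Your explicit bookkeeping---that $q_{2n}^{\pm}(2k+1)=0$ by definition and $q_{2n}^{\pm}(2k)=2\,p_{2n}^{\pm}(2k)$ by halving the group order---is precisely the content of ``immediate'' here.
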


\subsection{Example 2: Enumeration of unipotent elements by dimension of fixed space}

The paper \cite{FG3} used generating functions to enumerate unipotent elements in orthogonal groups of even characteristic, showing
that the number of unipotent elements of $O^{\pm}_{2n}(q)$ is $q^{2n^2-2n+1} \left(1+\frac{1}{q} \mp \frac{1}{q^n} \right)$. In this example, we give a more refined count. Similar results for the finite general linear and unitary groups appear in \cite{L0}, \cite{F3}.

\begin{theorem} \label{unipdim}
\begin{enumerate}
\item The proportion of elements of $O^{\pm}_{2n}(q)$ which are unipotent and have a fixed space of dimension $2k$ is
\[ \frac{(1-1/q^{2k})(1-1/q^{2(k+1)}) \cdots (1-1/q^{2(n-1)})}{q^{n-2k}|GL_k(q^2)|(1-1/q^2)(1-1/q^4) \cdots (1-1/q^{2(n-k)})} \left[ \frac{1}{2} \pm \frac{1}{2q^n} \right]. \]
\item The proportion of elements of $O^{\pm}_{2n}(q)$ which are unipotent and have a fixed space of dimension $2k+1$ is \[ \frac{1}{2 \cdot q^{n-1}|GL_k(q^2)|} \frac{(1-1/q^{2(k+1)})(1-1/q^{2(k+2)}) \cdots (1-1/q^{2(n-1)})}{(1-1/q^2)(1-1/q^4) \cdots (1-1/q^{2(n-k-1)})}.\]
\end{enumerate}
\end{theorem}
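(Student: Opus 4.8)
The plan is to extract the two generating functions from Theorem \ref{cycindex} that isolate unipotent elements, and then read off the coefficient of $u^{2n}$ in each. To restrict to unipotent elements, I would set $x_{z-1,\lambda} = 1$ for all $\lambda$ and $x_{\phi,\lambda} = 0$ whenever $\phi \neq z-1$ and $|\lambda| > 0$ (equivalently, keep only the empty-partition term in each factor for $\phi \neq z-1$, which contributes a multiplicative constant $1$). This collapses the products over $\phi \neq z-1$ to $1$ and leaves only the sum over partitions $\lambda = \lambda_{z-1}$. Applying this specialization to both parts of Theorem \ref{cycindex} gives clean single-variable generating functions in $u$ for the sum and difference of the unipotent proportions in $O^{\pm}_{2n}(q)$.

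Next I would incorporate the fixed-space constraint. Since the dimension of the fixed space of a unipotent $g$ equals $l(\lambda_{z-1}(g))$, enumerating unipotent elements with fixed space of dimension $d$ amounts to restricting the partition sum to $l(\lambda) = d$. For $d = 2k$ I would take the sum in part 1 of Theorem \ref{cycindex} over $\{\lambda : l(\lambda) = 2k,\ i \text{ odd} \Rightarrow m_i \text{ even}\}$ and the sum in part 2 over $\{\lambda : l(\lambda) = 2k,\ \text{all } m_i \text{ even}\}$; for $d = 2k+1$ only the symmetric-function sum in part 1 survives, because a partition all of whose parts have even multiplicity cannot have an odd number of parts, so the part-2 difference vanishes, exactly as in the proof of Theorem \ref{fixdim}. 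The key computational input is the evaluation of these restricted partition sums as explicit rational functions of $u$. I would invoke the same machinery already used in the proof of Theorem \ref{fixdim}: Theorem 5 of \cite{F3} and the remark after Theorem \ref{con} to rewrite the even-multiplicity sum in terms of a $GL_k(q^2)$ generating function, and the analogous identity for the sum in part 1. In fact the relevant sums over $l(\lambda) = 2k$ are precisely the ``$u^{2k}$-graded'' pieces that appear inside the proof of Theorem \ref{fixdim}, so I can reuse those closed forms directly.

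Once the two single-variable generating functions (for sum and difference) are in closed rational form, the final step is to extract the coefficient of $u^{2n}$ from each. Here I would apply the identity of Euler (\cite{A}, p.~19) exactly as in Theorem \ref{fixdim} to expand factors of the form $\prod_{i \geq k+1}(1 - u/q^{2i})$ as explicit $q$-series, obtaining the coefficient of $u^{2n}$ as a finite sum. The sum and difference of the $O^{+}$ and $O^{-}$ unipotent proportions give a $2 \times 2$ linear system, which I solve to recover each proportion individually, producing the $\tfrac{1}{2} \pm \tfrac{1}{2q^n}$ factor in part 1 and the factor $\tfrac{1}{2}$ in part 2. The main obstacle I anticipate is bookkeeping: carefully matching the power-of-$q$ normalizations coming from $n(\lambda)$, $o(\lambda)$, and the $\sum(\lambda_i')^2$ term against the $GL_k(q^2)$ centralizer formula, and confirming that the Euler-identity expansion telescopes to the stated closed forms $\frac{(1-1/q^{2k})\cdots(1-1/q^{2(n-1)})}{q^{n-2k}|GL_k(q^2)|(1-1/q^2)\cdots(1-1/q^{2(n-k)})}$ and its odd-dimension analogue. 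The representation-theoretic content is entirely packaged in Theorems \ref{con} and \ref{cycindex}, so the remaining work is a controlled, if delicate, generating-function calculation that parallels the already-completed proof of Theorem \ref{fixdim}.
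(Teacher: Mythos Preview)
Your proposal is correct and matches the paper's approach almost exactly: specialize the cycle index to the unipotent case (only the $z-1$ factor survives), restrict the partition sum to $l(\lambda)=d$, compute sum and difference via parts 1 and 2 of Theorem~\ref{cycindex}, and solve the resulting $2\times 2$ system. One small adjustment: in the unipotent case the contribution from $\phi\neq z-1$ is literally $1$, so there is no infinite product $\prod_{i\geq k+1}(1-u/q^{2i})$ to expand via Euler; instead the generating function has a \emph{finite} product $\prod_{i=1}^{k}(1-u^2/q^{2i-1})^{-1}$ in the denominator, and the paper extracts the coefficient of $u^{2n}$ using the $q$-binomial expansion (Theorem~349 of \cite{HW}) rather than Euler's identity. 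For the difference $u_{2n}^{+}(2k)-u_{2n}^{-}(2k)$, the paper shortcuts the calculation by invoking directly the known count (\cite{L0}, \cite{F3}) of unipotents in $GL_n(q^2)$ with $k$-dimensional fixed space, which is exactly what the remark after Theorem~\ref{con} hands you.
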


\begin{proof} Let $u_{2n}^{\pm}(2k,q)$ denote the proportion of elements of $O^{\pm}_{2n}(q)$ which are unipotent and have a fixed space of dimension $2k$. For part 1 of the theorem, arguing as in the proof of Theorem \ref{fixdim} (and noting that all partitions coming from polynomials other than $z-1$ are empty) gives that $u_{2n}^+(2k)+u_{2n}^-(2k)$ is the
coefficient of $u^{2n}$ in \[ \frac{u^{2k} q^k}{q^{2k^2}(1-u^2/q)(1-1/q^2) \cdots (1-u^2/q^{2k-1})(1-1/q^{2k})}.\] This is equal to $\frac{q^n}{|GL_k(q^2)|}$ multiplied by the coefficient of $u^{n-k}$ in \[ \frac{1}{(1-u/q^2)(1-u/q^4) \cdots (1-u/q^{2k})}.\] Applying Theorem 349 of \cite{HW}, one concludes that \begin{eqnarray*} & & u_{2n}^+(2k)+u_{2n}^-(2k)\\ & = & \frac{1}{q^{n-2k}|GL_k(q^2)|} \frac{(1-1/q^{2k})(1-1/q^{2(k+1)}) \cdots (1-1/q^{2(n-1)})}{(1-1/q^2)(1-1/q^4) \cdots (1-1/q^{2(n-k)})}. \end{eqnarray*}

By part 2 of Theorem \ref{con} and the remark following it, $u_{2n}^+(2k)-u_{2n}^-(2k)$ is the proportion of elements of $GL_n(q^2)$ which are unipotent and have a fixed space of dimension $k$. This is known (\cite{L0}, \cite{F3}) to be \[ \frac{1}{q^{2(n-k)} |GL_k(q^2)|}  \frac{(1-1/q^{2k})(1-1/q^{2(k+1)}) \cdots (1-1/q^{2(n-1)})}{(1-1/q^2)(1-1/q^4) \cdots (1-1/q^{2(n-k)})},\] and part 1 of the theorem follows.

For part 2 of the theorem, arguing as in the proof of part 2 of Theorem \ref{fixdim} (again noting that all partitions coming from polynomials other than $z-1$ are empty), one obtains that $u_{2n}^+(2k+1)+u_{2n}^-(2k+1)$ is the
coefficient of $u^{2n}$ in \[ \frac{u^{2k+2}}{q^{2k^2+k}(1-u^2/q)(1-1/q^2) \cdots (1-1/q^{2k})(1-u^2/q^{2k+1})}.\] This is equal to $\frac{1}{q^{k}|GL_k(q^2)|}$ multiplied by the coefficient of $u^{n-k-1}$ in \[ \frac{1}{(1-u/q)(1-u/q^3) \cdots (1-u/q^{2k+1})}.\] Applying Theorem 349 of \cite{HW}, one concludes that \begin{eqnarray*} & & u_{2n}^+(2k+1)+u_{2n}^-(2k+1) \\ & = & \frac{1}{q^{n-1}|GL_k(q^2)|} \frac{(1-1/q^{2(k+1)}) (1-1/q^{2(k+2)})\cdots (1-1/q^{2(n-1)})}{(1-1/q^2) (1-1/q^4) \cdots (1-1/q^{2(n-k-1)})}.\end{eqnarray*} By part 2 of Theorem \ref{con}, $u_{2n}^+(2k+1)=u_{2n}^-(2k+1)$, and the result follows. \end{proof}

Arguing as in the proof of Corollary \ref{imm}, the following result is immediate.

\begin{cor} Let $v_{2n}^{\pm}(k,q)$ denote the proportion of elements of $\Omega^{\pm}_{2n}(q)$ which are unipotent and have a fixed space of dimension $k$. Then $v_{2n}^{\pm}(k,q)=0$ if $k$ is odd, and $v_{2n}^{\pm}(2k,q)$ is equal to
\[ \frac{1}{q^{n-1}|GL_k(q^2)|} \frac{(1-1/q^{2(k+1)})(1-1/q^{2(k+2)}) \cdots (1-1/q^{2(n-1)})}{(1-1/q^2)(1-1/q^4) \cdots (1-1/q^{2(n-k-1)})}.\] \end{cor}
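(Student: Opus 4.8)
The plan is to obtain this corollary from Theorem \ref{unipdim} by exactly the index-$2$ counting argument that produced Corollary \ref{imm} from Theorem \ref{fixdim}. The single group-theoretic ingredient, already recorded above, is that $\Omega^{\pm}_{2n}(q)$ is the index-$2$ subgroup of $O^{\pm}_{2n}(q)$ consisting of the elements with even-dimensional fixed space (equivalently, with $l(\lambda_{z-1})$ even). There is no real obstacle beyond this observation.

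First I would treat the odd case: since every element of $\Omega^{\pm}_{2n}(q)$ has an even-dimensional fixed space, no element of $\Omega^{\pm}_{2n}(q)$ has a fixed space of odd dimension, and so $v^{\pm}_{2n}(k,q)=0$ for odd $k$. For the even case, the key point is that a unipotent $g\in O^{\pm}_{2n}(q)$ with $\dim\Ker(g-1)=2k$ automatically lies in $\Omega^{\pm}_{2n}(q)$, because its fixed space has even dimension. Hence the set of unipotent elements with a $2k$-dimensional fixed space is the same set whether counted inside $O^{\pm}_{2n}(q)$ or inside $\Omega^{\pm}_{2n}(q)$; as $|\Omega^{\pm}_{2n}(q)|=|O^{\pm}_{2n}(q)|/2$, the proportion simply doubles, giving $v^{\pm}_{2n}(2k,q)=2\,u^{\pm}_{2n}(2k,q)$, where $u^{\pm}_{2n}(2k,q)$ is the proportion computed in part (1) of Theorem \ref{unipdim}.

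Substituting part (1) of Theorem \ref{unipdim} then reduces the proof to a routine simplification, the factor $2$ absorbing the $1/2$ appearing in that formula, precisely as the factors $1/2$ in Theorem \ref{fixdim} disappeared on passing to Corollary \ref{imm}. Thus the only thing requiring verification is the elementary counting identity $v^{\pm}_{2n}(2k,q)=2\,u^{\pm}_{2n}(2k,q)$; this is why the result is flagged as immediate, and I would present it in a single short paragraph mirroring the proof of Corollary \ref{imm}.
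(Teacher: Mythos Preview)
Your approach is exactly the paper's: it says only ``Arguing as in the proof of Corollary~\ref{imm}, the following result is immediate.'' The index-$2$ counting you describe is precisely what is intended.

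One caveat, though: your final ``routine simplification'' will not actually yield the formula displayed in the corollary. Doubling part~(1) of Theorem~\ref{unipdim} gives
\[
v^{\pm}_{2n}(2k,q)=\frac{(1-1/q^{2k})(1-1/q^{2(k+1)})\cdots(1-1/q^{2(n-1)})}{q^{\,n-2k}\,|GL_k(q^2)|\,(1-1/q^2)\cdots(1-1/q^{2(n-k)})}\Bigl(1\pm\frac{1}{q^n}\Bigr),
\]
which still depends on the type $\pm$ (already visible for $n=k=1$, where $\Omega^{\pm}_{2}(q)$ has odd order and $v^{\pm}_{2}(2,q)=1/(q\mp1)$). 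The formula printed in the corollary is instead twice the expression in part~(2) of Theorem~\ref{unipdim} and is independent of the sign, so it appears to be a typographical slip in the statement. Your argument is correct and proves the formula displayed above; just do not expect it to collapse to the sign-free expression as written.
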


\subsection{Example 3: Cyclic matrices}

A matrix over $\mathbb{F}_q$ is called cyclic if its characteristic polynomial is equal to its minimal polynomial. Motivated by applications to computational group theory \cite{NP1}, the proportion of cyclic matrices in $O^{\pm}_{2n}(q)$ (even characteristic included) has been studied in \cite{NP2} via geometric techniques and in \cite{FNP} via generating functions. Let us illustrate how Theorem \ref{cycindex} reproduces the generating functions of \cite{FNP}.

Let $c_O^{\pm}(2n,q)$ denote the proportion of cyclic matrices in the even characteristic orthogonal groups $O^{\pm}_{2n}(q)$. Define generating functions \[ C_{O^+}(u) = 1 + \sum_{n \geq 1} c_{O^+}(2n,q) u^n; \ \ \ C_{O^-}(u) = \sum_{n \geq 1} c_{O^-}(2n,q) u^n .\] An element of $O^{\pm}_{2n}(q)$ is cyclic if and only if all the $\lambda_{\phi}$ appearing in its rational canonical form have at most 1 part. Thus part 1 of Theorem \ref{cycindex} implies that \begin{eqnarray*} & & C_{O^+}(u) + C_{O^-}(u) \\ & = & \left( 1+ \frac{u}{1-u/q} \right) \prod_{d \geq 1} \left( 1+ \frac{u^d}{(q^d+1)(1-u^d/q^d)} \right)^{N^*(q;2d)}\\ & & \cdot \prod_{d \geq 1} \left( 1+\frac{u^d}{(q^d-1)(1-u^d/q^d)} \right)^{M^*(q;d)}. \end{eqnarray*} Here $N^*(q;d)$ denotes the number of monic irreducible self-conjugate polynomials $\phi$ of degree $d$ over $\mathbb{F}_q$ and $M^*(q;d)$ denotes the number of (unordered) monic irreducible conjugate pairs $\{\phi,\phi^* \}$ of degree $d$ over $\mathbb{F}_q$.
Part 2 of Theorem \ref{cycindex} implies that \begin{eqnarray*} C_{O^+}(u) - C_{O^-}(u) & = &  \prod_{d \geq 1} \left( 1- \frac{u^d}{(q^d+1)(1+u^d/q^d)} \right)^{N^*(q;2d)}\\ & & \cdot \prod_{d \geq 1} \left( 1+\frac{u^d}{(q^d-1)(1-u^d/q^d)} \right)^{M^*(q;d)}. \end{eqnarray*}

Similarly, Theorem \ref{cycindex} reproduces the generating functions of \cite{FNP} for the proportions of separable (square-free characteristic polynomial) and semisimple (square-free minimal polynomial) matrices in $O^{\pm}_{2n}(q)$. The proportions of regular semisimple elements in $O^{\pm}_{2n}(q)$ and $\Omega^{\pm}_{2n}(q)$ were studied by generating functions in \cite{FG4}, and Theorem \ref{cycindex} captures those results too.

\section{Random partitions} \label{partitions}

In this section we use our results about the even characteristic orthogonal groups to define and study a probability measure $R_{(u,q)}$ on the set of all partitions $\lambda$ of all natural numbers such that all odd parts of $\lambda$ occur with even multiplicity. We also study related measures $R^{e}_{(u,q)}$, $R^{o}_{(u,q)}$ arising from the index two simple subgroup $\Omega^{\pm}(2n,q)$ of $O^{\pm}(2n,q)$ and its non-trivial coset respectively.

These random partitions are very natural objects (analogous to those defined in \cite{F3}, \cite{F2} for the other classical groups). One reason to be interested in these measures is that they can be used to give probabilistic proofs of Theorems \ref{fixdim} and \ref{unipdim} (arguing along the lines of \cite{F2}). We also mention that the corresponding measures for the general linear groups arise in the Cohen-Lenstra \cite{CL} heuristics for number fields (the thesis \cite{Le} discusses this), and we have high hopes that the measures $R_{(u,q)}$ will arise in a number-theoretic context too.

\begin{definition}
{\rm Fix $0<u<q^{1/2}$ and $q$ a prime power. The measure $R_{(u,q)}$ is defined on the set of all partitions $\lambda$ (the size can vary) such that all odd parts occur with even multiplicity, by the formula:
\begin{eqnarray*} R_{(u,q)}(\lambda) & = & \frac{\prod_{i \geq 1} (1-u^2/q^{2i-1})}{1+u^2} \\ & & \cdot \frac{q^{l(\lambda)} u^{|\lambda|
}}{q^{n(\lambda)+\frac{|\lambda|}{2}+\frac{o(\lambda)}{2}} \prod_i (1-1/q^2) (1-1/q^4) \cdots (1-1/q^{2 \lfloor m_i(\lambda)/2 \rfloor})}.\end{eqnarray*}
}
\end{definition}

Theorem \ref{relate} relates the measure $R_{(u,q)}$ to the asymptotics of finite orthogonal groups. The use of auxiliary randomization (i.e. randomizing the variable $n$) is a mainstay of statistical mechanics known as the grand canonical ensemble. We say that an infinite collection of random variables is independent if any finite subcollection is.

\begin{theorem} \label{relate}
\begin{enumerate}
\item Fix $u$ with $0<u<1$. Then choose a random even natural number $N$ such that the probability that $N=0$ is $\frac{1-u^2}{1+u^2}$ and the probability that $N=2n \geq 2$ is equal to $2u^{2n} \frac{1-u^2}{1+u^2}$. Choose one of $O^{\pm}(N,q)$ at random (each with probability 1/2), and let $g$ be a random element of the chosen group. Let $\Lambda_{\phi}(g)$ be the partition corresponding to the polynomial $\phi$ in the rational canonical form of $g$. Then as $\phi$ varies, aside from the fact that $\Lambda_{\phi}=\Lambda_{\phi^*}$, these random variables are independent with probability laws the same as for the symplectic groups in Theorem 1 of \cite{F2}, except for the polynomial $z-1$ which has the distribution $R_{(u,q)}$.

\item Choose one of $O^{\pm}_{2n}(q)$ at random (each with probability 1/2), and let $g$ be a random element of the chosen group. Let $\Lambda_{\phi}(g)$ be the partition corresponding to the polynomial $\phi$ in the rational canonical form of $g$. Let $q$ be fixed and $n \rightarrow \infty$. Then as $\phi$ varies, aside from the fact that $\Lambda_{\phi}=\Lambda_{\phi^*}$, these random variables are independent with probability laws the same as for the symplectic groups in Theorem 1 of \cite{F2}, except for the polynomial $z-1$ which has the distribution $R_{(1,q)}$.
\end{enumerate}
\end{theorem}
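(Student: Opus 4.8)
The plan is to read both parts off the cycle index of Theorem \ref{cycindex}(1), treating the indeterminates $x_{\phi,\lambda}$ as bookkeeping for a joint probability generating function. The guiding principle is that, since the $x_{\phi,\lambda}$ are free variables, any factorization of the joint generating function into one factor per polynomial $\phi$ (with a conjugate pair $\{\phi,\phi^*\}$ kept together, which is exactly what produces the tie $\Lambda_\phi=\Lambda_{\phi^*}$) is precisely the assertion that the $\Lambda_\phi$ are independent, provided each factor is normalized to take the value $1$ when its variables are all set to $1$.

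For part 1, I would first assemble the generating function of the auxiliary model. By the definition of the law of $N$ and the choice of $O^{\pm}$ with probability $1/2$ each, the expectation $E[\prod_\phi x_{\phi,\Lambda_\phi(g)}]$ is $\frac{1-u^2}{1+u^2}$ times the left-hand side of Theorem \ref{cycindex}(1): the ``$1$'' in the cycle index is the atom $N=0$, and the weight $2u^{2n}\frac{1-u^2}{1+u^2}$ reproduces the factor $u^{2n}$ together with the averaging over $O^{\pm}_{2n}(q)$. Applying Theorem \ref{cycindex}(1) presents this as $\frac{1-u^2}{1+u^2}$ times a product over $\phi$. The one genuine step is to distribute the scalar $\frac{1-u^2}{1+u^2}$ among the factors. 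I would absorb $\frac{\prod_{i\ge 1}(1-u^2/q^{2i-1})}{1+u^2}$ into the $z-1$ factor; by the very definition of $R_{(u,q)}$ this turns that factor into $\sum_\lambda R_{(u,q)}(\lambda)\,x_{z-1,\lambda}$, a probability generating function. The leftover scalar $\frac{1-u^2}{\prod_{i\ge 1}(1-u^2/q^{2i-1})}$ is exactly the normalization needed by the other factors, because the product of the non-$(z-1)$ factors at $x=1$ equals $\frac{\prod_{i\ge 1}(1-u^2/q^{2i-1})}{1-u^2}$, the identity already used in the proof of Theorem \ref{fixdim} (from \cite{F1}, \cite{FNP}). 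As the non-$(z-1)$ factors of Theorem \ref{cycindex}(1) coincide with those of the symplectic cycle index, their normalized versions are the laws of Theorem 1 of \cite{F2}, and the consistency of the split is forced by $\frac{1+u^2}{1-u^2}$ being the value of the whole left-hand side at $x=1$.

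For part 2, I would fix finitely many polynomials $\phi_1,\dots,\phi_r$ with target partitions $\mu_1,\dots,\mu_r$, set $x_{\phi_j,\lambda}=\delta_{\lambda,\mu_j}$ and all remaining $x_{\phi,\lambda}=1$. Then $\frac12\big(p^+(\{\mu_j\})+p^-(\{\mu_j\})\big)$ at size $2n$ is $\frac12$ the coefficient of $u^{2n}$ in the so-specialized cycle index. By Theorem \ref{cycindex}(1) this specialized series equals $\frac{1+u^2}{1-u^2}\prod_{j=1}^r \frac{T_j(u)}{F_{\phi_j}(u)}$, where $T_j(u)$ is the single surviving term of the $\phi_j$-factor at $\lambda=\mu_j$ and $F_{\phi_j}(u)$ is that factor at $x=1$; the product over $j$ is regular and nonvanishing at $u=1$ since each $F_{\phi_j}(1)$ is. Because the pole of $\frac{1+u^2}{1-u^2}$ at $u^2=1$ is simple, standard coefficient asymptotics give $\lim_{n\to\infty}\frac12\big(p^++p^-\big)=\prod_{j=1}^r \frac{T_j(1)}{F_{\phi_j}(1)}$, and each factor is the $u=1$ value of the Part 1 marginal, namely $R_{(1,q)}(\mu_j)$ for $\phi_j=z-1$ and the symplectic law otherwise. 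The product form again yields independence.

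The main obstacle is the normalization in part 1: checking that the global scalar $\frac{1-u^2}{1+u^2}$ splits so the $z-1$ factor becomes $R_{(u,q)}$ (equivalently, that $R_{(u,q)}$ has total mass $1$) while the rest matches the symplectic normalization. This rests on the product identity $\prod_{\phi\ne z-1}F_\phi(u)\big|_{x=1}=\frac{\prod_{i\ge 1}(1-u^2/q^{2i-1})}{1-u^2}$, an Euler-type computation already performed in Section \ref{enumerate}. In part 2 the analogous point is only the regularity of the finitely many modified factors at $u=1$, after which the limit and its factorization follow at once from the simple pole at $u^2=1$.
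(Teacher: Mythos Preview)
Your proposal is correct and follows essentially the same approach as the paper: normalize the cycle index of Theorem \ref{cycindex}(1) by the scalar $\frac{1-u^{2}}{1+u^{2}}$, use the product identity $\prod_{\phi\neq z-1}F_{\phi}(u)=\frac{\prod_{i\geq 1}(1-u^{2}/q^{2i-1})}{1-u^{2}}$ to match the $z-1$ factor with $R_{(u,q)}$ and the remaining factors with the symplectic laws, and for part 2 pass to $u=1$ via simple-pole coefficient asymptotics. The paper carries this out explicitly only for the marginal of $\Lambda_{z-1}$ (referring to analogous arguments in \cite{F0} for the rest), whereas you treat all $\phi$ at once, but the substance is the same.
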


\begin{proof} The method of proof is analogous to that used for the other classical groups (see the survey \cite{F0}), so we demonstrate the claim for $\Lambda_{z-1}$, as that is the interesting new feature. In part 1 of Theorem \ref{cycindex}, set $x_{\phi,\lambda}=1$ for $\phi \neq z-1$ and $x_{z-1,\lambda}=x_{z-1,\lambda} \cdot u^{|\lambda|}$. One obtains the equation

\item \begin{eqnarray*} & & 1+\sum_{n \geq 1}
\frac{u^{2n}}{|O^+_{2n}(q)|} \sum_{g \in O^+_{2n}(q)}
x_{z-1,\lambda_{z-1}(g)}  u^{|\lambda_{z-1}(g)|} \\ & & + \sum_{n \geq 1}
\frac{u^{2n}}{|O^-_{2n}(q)|} \sum_{g \in O^-_{2n}(q)}
x_{z-1,\lambda_{z-1}(g)} u^{|\lambda_{z-1}(g)|}\\
& = & \left( \sum_{|\lambda| \
even \atop i \ odd \Rightarrow m_i \ even} \frac{ x_{z-1,\lambda}
u^{|\lambda|} q^{l(\lambda)} }{q^{n(\lambda)+\frac{|\lambda|}{2}+\frac{o(\lambda)}{2}}
\prod_i (1-1/q^2) \cdots (1-1/q^{2 \lfloor \frac{m_i(\lambda)}{2}
\rfloor})} \right)\\
& & \cdot \prod_{\phi = {\phi^*} \atop \phi
\neq z-1} \left( \sum_{\lambda} \frac{
u^{|\lambda| \cdot \deg(\phi)}}{B(\phi,\lambda)} \right)
\prod_{\{\phi,\phi^*\} \atop \phi \neq \phi^*} \left( \sum_{\lambda}
\frac{u^{2|\lambda| \cdot
\deg(\phi)}}{B(\phi,\lambda) B(\phi^*,\lambda)} \right)\\
& = & \left[ \frac{\prod_{i \geq 1} (1-u^2/q^{2i-1})}{1-u^2} \right] \\
& & \cdot \left( \sum_{|\lambda| \
even \atop i \ odd \Rightarrow m_i \ even} \frac{ x_{z-1,\lambda}
u^{|\lambda|} q^{l(\lambda)} }{q^{n(\lambda)+\frac{|\lambda|}{2}+\frac{o(\lambda)}{2}}
\prod_i (1-1/q^2) \cdots (1-1/q^{2 \lfloor \frac{m_i(\lambda)}{2}
\rfloor})} \right).
\end{eqnarray*} The final equality follows as in the proof of part 1 of Theorem \ref{fixdim}.
Multiplying both sides by $(1-u^2)/(1+u^2)$ implies that
\begin{eqnarray*} & & \frac{1-u^2}{1+u^2} +\sum_{n \geq 1}
\frac{2(1-u^2)u^{2n}}{1+u^2} \left[ \frac{\sum_{g \in O^+_{2n}(q)}
x_{z-1,\lambda_{z-1}(g)} u^{|\lambda_{z-1}(g)|}}{2|O^+_{2n}(q)|} \right] \\ & & +
\sum_{n \geq 1}
\frac{2(1-u^2)u^{2n}}{1+u^2} \left[ \frac{\sum_{g \in O^-_{2n}(q)}
x_{z-1,\lambda_{z-1}(g)}u^{|\lambda_{z-1}(g)|}}{2|O^-_{2n}(q)|} \right] \\
& = & \frac{\prod_{i \geq 1} (1-u^2/q^{2i-1})}{1+u^2} \\
& & \cdot \sum_{ |\lambda| \
even \atop i \ odd \Rightarrow m_i \ even}  \frac{q^{l(\lambda)} u^{|\lambda|} x_{z-1,\lambda}}
{q^{n(\lambda)+ \frac{|\lambda|}{2}+\frac{o(\lambda)}{2}} \prod_i (1-1/q^2)  \cdots (1-1/q^{2 \lfloor m_i(\lambda)/2 \rfloor})} \\
& = & \sum_{\lambda} R_{(u,q)}(\lambda) x_{z-1,\lambda} , \end{eqnarray*} which proves part 1.

To prove the second assertion, one uses the fact that if a Taylor series of a function $f(u^2)$ around
$0$ converges at $u=1$, then the $n \rightarrow \infty$ limit of the coefficient of $u^{2n}$ in $\frac{f(u^2)(1+u^2)}{2(1-u^2)}$
is equal to $f(1)$. \end{proof}

Next, we give a Markov chain method for sampling from the distribution $R_{(u,q)}$. Define two Markov
chains $K_1,K_2$ on the natural numbers with transition probabilities

\[ K_1(a,b) =             \left\{ \begin{array}{ll}
        \frac{u^aP'_{O,u}(b)}{P'_{Sp,u}(a) q^{\frac{a^2-b^2+2(a+1)b}{4}} (q^{a-b
}-1) \cdots (q^4-1)(q^2-1)}             & \mbox{if $a-b$ even, $b \leq a$}\\

                0       & \mbox{else}

                                                \end{array}
                        \right.                  \]

\[ K_2(a,b) =  \left\{ \begin{array}{ll}
        \frac{u^aP'_{Sp,u}(b) q^{(a-b)^2/4}}{P'_{O,u}(a) q^{\frac{a^2+b}{2}-a} (
q^{a-b}-1) \cdots (q^4-1)(q^2-1)}             & \mbox{if $a-b$ even, $b \leq a$}\\

        \frac{u^aP'_{Sp,u}(b) q^{((a-b)^2-1)/4}}{P'_{O,u}(a) q^{\frac{a^2-a}{2}}
 (q^{a-b-1}-1) \cdots (q^4-1)(q^2-1)}    & \mbox{if
$a-b$ odd, $b \leq a$}\\
0 & \mbox{else}

                        \end{array}
                        \right.  \]

where $P'_{Sp,u},P'_{O,u}$ are defined as follows: \begin{eqnarray*}
P'_{Sp,u}(2k) & = & \frac{u^{2k}}{q^{2k^2+k}(1-u^2/q)(1-1/q^2) \cdots (1-u^2/q^{
2k-1})(1-1/q^{2k})}\\
P'_{Sp,u}(2k+1) & = & \frac{u^{2k+2}}{q^{2k^2+3k+1}(1-u^2/q)(1-1/q^2) \cdots (1-
1/q^{2k})(1-u^2/q^{2k+1})}\\
P'_{O,u}(2k) & = & \frac{u^{2k}}{q^{2k^2-k}(1-u^2/q)(1-1/q^2) \cdots (1-u^2/q^{2
k-1})(1-1/q^{2k})}\\
P'_{O,u}(2k+1) & = & \frac{u^{2k+1}}{q^{2k^2+k}(1-u^2/q)(1-1/q^2) \cdots (1-1/q^
{2k})(1-u^2/q^{2k+1})}.\\
\end{eqnarray*}

\begin{theorem} \label{samp} Let $\lambda_1'$ be a random natural number which is equal to $2k$ with probability \[ \frac{\prod_{i=1}^{\infty} (1-u^2/q^{2i-1})}{1+u^2} \frac{u^{2k}
}{q^{2k^2-k}(1-u^2/q)(1-1/q^2) \cdots (1-u^2/q^{2k-1})(1-1/q^{2k})} \] and equal to $2k+1$ with probability \[  \frac{\prod_{i=1}^{\infty} (1-u^2/q^{2i-1})}{1+u^2} \frac{u^{2
k+2}}{q^{2k^2+k}(1-u^2/q)(1-1/q^2) \cdots (1-1/q^{2k})(1-u^2/q^{2k+1})}\] Define $\lambda_2',\lambda_3',\cdots$ according to the rules
that if $\lambda_i'=a$, then $\lambda_{i+1}'=b$ with probability $K_1(a,b)$ if
$i$ is odd and probability $K_2(a,b)$ if $i$ is even. Then the resulting
partition is distributed according to $R_{(u,q)}$.
\end{theorem}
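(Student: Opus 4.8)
The plan is to verify that the Markov chain construction produces a partition $\lambda$ (described via its conjugate parts $\lambda_1' \geq \lambda_2' \geq \cdots$) whose distribution is exactly $R_{(u,q)}$. The key structural fact, which I would first extract from the formula for $R_{(u,q)}$, is that the density factors as a telescoping product indexed by the columns of the partition. Writing $\lambda_i' = a_i$ for the column heights, the constraint that all odd parts have even multiplicity translates into a parity condition linking consecutive $a_i$ (since $m_i(\lambda) = \lambda_i' - \lambda_{i+1}'$, the multiplicity of part $i$ is $a_i - a_{i+1}$, and requiring this to be even for odd $i$ forces certain $a_i - a_{i+1}$ to be even). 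So first I would rewrite the exponent $n(\lambda) + \frac{|\lambda|}{2} + \frac{o(\lambda)}{2}$, the factor $q^{l(\lambda)}$, and the product $\prod_i (1-1/q^2)\cdots(1-1/q^{2\lfloor m_i(\lambda)/2\rfloor})$ entirely in terms of the $a_i$, using $|\lambda| = \sum_i a_i$, $n(\lambda) = \sum_i \binom{a_i}{2}$, $l(\lambda) = a_1$, and $o(\lambda) = \sum_{i \text{ odd}} m_i(\lambda)$.

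Next I would show that this rewritten density telescopes into an initial factor for $\lambda_1' = a_1$ times a product of conditional factors of the form $K_1(a_i, a_{i+1})$ for $i$ odd and $K_2(a_i, a_{i+1})$ for $i$ even. Concretely, the auxiliary functions $P'_{Sp,u}$ and $P'_{O,u}$ are designed so that the initial distribution on $\lambda_1'$ stated in the theorem is exactly the $z-1$ marginal extracted from the cycle index (this matches the generating-function computations already carried out in the proof of Theorem \ref{fixdim}, where these very expressions appear as coefficient generating functions). I would then check that the ratio of the full density at a partition with columns $(a_1, a_2, \ldots)$ to the density at the truncation after column $i$ is precisely the transition probability $K_1$ or $K_2$, depending on the parity of $i$. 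This is the heart of the argument: one multiplies out $R_{(u,q)}(\lambda)$ as a product over $i \geq 1$ of terms depending only on the pair $(a_i, a_{i+1})$ (together with the column index parity), and identifies each term with the stated transition kernel.

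The main obstacle I expect is bookkeeping the parity data correctly. The Markov chain alternates between $K_1$ (used stepping from an odd-indexed column to the next) and $K_2$ (used stepping from an even-indexed column), and the transition $K_2$ itself has two cases according to whether $a-b$ is even or odd. I would need to verify that these alternating kernels, together with the parity constraints forced by "odd parts have even multiplicity," combine so that the $q$-power exponents and the $(q^j - 1)$-type products telescope exactly. In particular, the exponent $\frac{a^2 - b^2 + 2(a+1)b}{4}$ in $K_1$ and the companion exponents in $K_2$ must sum to $n(\lambda) + \frac{|\lambda|}{2} + \frac{o(\lambda)}{2} - l(\lambda)$ after subtracting the initial factor's contribution, and the denominators $(q^{a-b}-1)\cdots(q^2-1)$ must reassemble into $\prod_i(1-1/q^2)\cdots(1-1/q^{2\lfloor m_i/2\rfloor})$ via the multiplicities $m_i = a_i - a_{i+1}$. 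Finally I would confirm that the $K_i$ are genuine stochastic matrices, i.e.\ that $\sum_b K_1(a,b) = \sum_b K_2(a,b) = 1$; this summation is an Euler-type $q$-series identity of the same flavor as the one cited from \cite{A} in the proof of Theorem \ref{fixdim}, and establishing it rigorously is the one place where a nontrivial identity, rather than pure bookkeeping, is required.
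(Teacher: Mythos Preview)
Your approach is correct in principle but takes a genuinely different route from the paper. The paper's proof is a two-line reduction: one observes the identity
\[
R_{(u,q)}(\lambda) = \frac{q^{l(\lambda)}}{1+u^2}\,P_{Sp,u}(\lambda),
\]
where $P_{Sp,u}$ is the symplectic measure studied in \cite{F2}. Since $l(\lambda)=\lambda_1'$, this reweighting depends only on the first column height, so the transition kernels $K_1,K_2$ for $R_{(u,q)}$ are \emph{identical} to those already established for $P_{Sp,u}$ in Theorems 3 and 4 of \cite{F2}, and only the initial distribution on $\lambda_1'$ needs to be adjusted by the factor $q^{a_1}/(1+u^2)$. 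All of the telescoping bookkeeping and the verification that $K_1,K_2$ are stochastic is thus inherited wholesale from \cite{F2}.

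Your direct-telescoping plan would work and is self-contained, but it amounts to redoing the \cite{F2} computation from scratch (with the harmless extra factor $q^{a_1}$ absorbed into the first step). The paper's route is shorter and more conceptual: it isolates exactly what is new about $R_{(u,q)}$ relative to the symplectic case (a single column-dependent reweighting) and delegates the rest. Your route has the advantage of not requiring the reader to consult \cite{F2}, but the Euler-type identities you flag as the ``one nontrivial step'' are precisely what \cite{F2} already proved, so in practice you would be reproducing that argument.
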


\begin{proof} The crucial observation is that $R_{(u,q)}$ can be related to a measure $P_{Sp,u}$ studied in \cite{F2}. Indeed, comparing formulas one sees that \[ R_{(u,q)}(\lambda) = \frac{q^{l(\lambda)}}{(1+u^2)} \cdot P_{Sp,u}(\lambda) \] for all $\lambda$. Hence the theorem follows from Theorems 3 and 4 of \cite{F2}.
\end{proof}

Next, we define and study measures $R^e_{(u,q)}$ and $R^o_{(u,q)}$.

\begin{definition}
{\rm Fix $0<u<q^{1/2}$ and $q$ a prime power. The measure $R^e_{(u,q)}$ is defined on the set of all partitions $\lambda$ (the size can vary) with an even number of parts and such that all odd parts occur with even multiplicity, by the formula:
\begin{eqnarray*} R^e_{(u,q)}(\lambda) & = & \prod_{i \geq 1} (1-u^2/q^{2i-1}) \\ & & \cdot \frac{q^{l(\lambda)} u^{|\lambda|
}}{q^{n(\lambda)+\frac{|\lambda|}{2}+\frac{o(\lambda)}{2}} \prod_i (1-1/q^2) (1-1/q^4) \cdots (1-1/q^{2 \lfloor m_i(\lambda)/2 \rfloor})}.
\end{eqnarray*} We also define a measure $R^o_{(u,q)}$ on the set of all partitions $\lambda$ (the size can vary) with an odd number of parts and such that all odd parts occur with even multiplicity, by the formula:
\begin{eqnarray*}R^o_{(u,q)}(\lambda) & = & \frac{1}{u^2} \prod_{i \geq 1} (1-u^2/q^{2i-1}) \\ & & \cdot \frac{q^{l(\lambda)} u^{|\lambda|
}}{q^{n(\lambda)+\frac{|\lambda|}{2}+\frac{o(\lambda)}{2}} \prod_i (1-1/q^2) (1-1/q^4) \cdots (1-1/q^{2 \lfloor m_i(\lambda)/2 \rfloor})}.
\end{eqnarray*}
}
\end{definition}

These measures arise from $\Omega^{\pm}$ and its non-trivial coset in the following way. We omit the proof, which is almost identical to that of Theorem \ref{relate}.

\begin{theorem} \label{relate1}
\begin{enumerate}
\item Fix $u$ with $0<u<1$. Then choose a random even natural number $N$ such that the probability that $N=2n$ is equal to $(1-u^2)u^{2n}$. Choose one of $\Omega^{\pm}_N(q)$ at random (each with probability 1/2), and let $g$ be a random element of the chosen group. Let $\Lambda_{\phi}(g)$ be the partition corresponding to the polynomial $\phi$ in the rational canonical form of $g$. Then as $\phi$ varies, aside from the fact that $\Lambda_{\phi}=\Lambda_{\phi^*}$, these random variables are independent with probability laws the same as for the symplectic groups in Theorem 1 of \cite{F2}, except for the polynomial $z-1$ which has the distribution $R^{e}_{(u,q)}$.

\item Choose one of $\Omega^{\pm}_{2n}(q)$ at random (each with probability 1/2), and let $g$ be a random element of the chosen group. Let $\Lambda_{\phi}(g)$ be the partition corresponding to the polynomial $\phi$ in the rational canonical form of $g$. Let $q$ be fixed and $n \rightarrow \infty$. Then as $\phi$ varies, aside from the fact that $\Lambda_{\phi}=\Lambda_{\phi^*}$, these random variables are independent with probability laws the same as for the symplectic groups in Theorem 1 of \cite{F2}, except for the polynomial $z-1$ which has the distribution $R^{e}_{(1,q)}$.

\item Fix $u$ with $0<u<1$. Then choose a random even natural number $N$ such that the probability that $N=2n \geq 2$ is equal to $(1-u^2)u^{2(n-1)}$. Choose one of the non-trivial cosets of $\Omega^{\pm}_N(q)$ at random (each with probability 1/2), and let $g$ be a random element of the chosen coset. Let $\Lambda_{\phi}(g)$ be the partition corresponding to the polynomial $\phi$ in the rational canonical form of $g$. Then as $\phi$ varies, aside from the fact that $\Lambda_{\phi}=\Lambda_{\phi^*}$, these random variables are independent with probability laws the same as for the symplectic groups in Theorem 1 of \cite{F2}, except for the polynomial $z-1$ which has the distribution $R^{o}_{(u,q)}$.

\item Choose one of the non-trivial cosets of $\Omega^{\pm}_{2n}(q)$ at random (each with probability 1/2), and let $g$ be a random element of the chosen coset. Let $\Lambda_{\phi}(g)$ be the partition corresponding to the polynomial $\phi$ in the rational canonical form of $g$. Let $q$ be fixed and $n \rightarrow \infty$. Then as $\phi$ varies, aside from the fact that $\Lambda_{\phi}=\Lambda_{\phi^*}$, these random variables are independent with probability laws the same as for the symplectic groups in Theorem 1 of \cite{F2}, except for the polynomial $z-1$ which has the distribution $R^{o}_{(1,q)}$.
\end{enumerate}
\end{theorem}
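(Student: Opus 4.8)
The plan is to rerun the proof of Theorem \ref{relate} word for word, with the full orthogonal cycle index of Theorem \ref{cycindex} replaced by the $\Omega$-version described in the Remark following that theorem. The essential observation is that $\Omega^{\pm}_{2n}(q)$ is exactly the set of $g\in O^{\pm}_{2n}(q)$ with $l(\lambda_{z-1}(g))$ even, so that belonging to $\Omega^{\pm}$, or to its non-trivial coset, is governed purely by the parity of the number of parts of $\lambda_{z-1}$. Hence in the generating identity of Theorem \ref{cycindex} the only modification is that the bracketed sum over $\lambda$ attached to the polynomial $z-1$ is restricted to partitions with an even number of parts (for $\Omega^{\pm}$) or an odd number of parts (for the coset); every factor indexed by $\phi\neq z-1$ is left intact. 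After specializing $x_{\phi,\lambda}=1$ for $\phi\neq z-1$, this product over $\phi\neq z-1$ factors and collapses, via Lemma 1.3.17 of \cite{FNP} exactly as in the proofs of Theorems \ref{fixdim} and \ref{relate}, to $\prod_{i\geq 1}(1-u^2/q^{2i-1})/(1-u^2)$. Since this factor is blind to the parity condition imposed on $\lambda_{z-1}$, the variables $\Lambda_\phi$ with $\phi\neq z-1$ keep the independence and the symplectic laws of Theorem 1 of \cite{F2}, and only the marginal of $\Lambda_{z-1}$ is affected---this yields the independence assertion of each of the four parts.

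First I would treat the finite-$u$ statements, parts 1 and 3. For part 1, set $x_{\phi,\lambda}=1$ for $\phi\neq z-1$ and $x_{z-1,\lambda}\mapsto x_{z-1,\lambda}u^{|\lambda|}$ in the $\Omega$-cycle index, just as in Theorem \ref{relate}. Every admissible $\lambda_{z-1}$ has all odd parts of even multiplicity, so $|\lambda_{z-1}|$ is automatically even and both sides are power series in $u^2$; collapsing the $\phi\neq z-1$ product leaves $\prod_{i\geq 1}(1-u^2/q^{2i-1})/(1-u^2)$ times the $z-1$-sum restricted to an even number of parts. Multiplying through by $(1-u^2)$ and using $|O^{\pm}_{2n}(q)|=2|\Omega^{\pm}_{2n}(q)|$ converts the left side into the expectation of $x_{z-1,\Lambda_{z-1}}$ under the part-1 randomization (mass $(1-u^2)u^{2n}$ on $N=2n$, then a fair choice of $\Omega^{\pm}$) and the right side into $\sum_\lambda R^e_{(u,q)}(\lambda)\,x_{z-1,\lambda}$; this both identifies the marginal as $R^e_{(u,q)}$ and confirms $\prod_{i\geq 1}(1-u^2/q^{2i-1})$ as the normalizing constant. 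Part 3 is the identical computation for the coset-cycle index, which is (full) $-$ ($\Omega$) and hence restricts the $z-1$-sum to an odd number of parts; here the coset is empty at $n=0$ and the randomization puts mass $(1-u^2)u^{2(n-1)}$ on $N=2n$, and these two features together manufacture precisely the extra factor $1/u^2$ in the definition of $R^o_{(u,q)}$.

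For the limit statements, parts 2 and 4, I would instead keep $x_{z-1,\lambda}$ free (no substitution $x_{z-1,\lambda}\mapsto x_{z-1,\lambda}u^{|\lambda|}$) and extract coefficients directly. With $x_{\phi,\lambda}=1$ for $\phi\neq z-1$, the $\Omega$-cycle index takes the shape $h(u^2)/(1-u^2)$, where $h(u^2)=\prod_{i\geq 1}(1-u^2/q^{2i-1})$ times the even-parts $z-1$-sum is a power series in $u^2$ with $h(1)=\sum_\lambda R^e_{(1,q)}(\lambda)\,x_{z-1,\lambda}$, and whose coefficient of $u^{2n}$ equals the $\Omega^{\pm}$-averaged generating function $\tfrac12\big(\mathbb{E}_{\Omega^+_{2n}}+\mathbb{E}_{\Omega^-_{2n}}\big)[x_{z-1,\Lambda_{z-1}}]$. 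Applying, coefficient by coefficient in the $x$'s, the Tauberian fact that the coefficient of $u^{2n}$ in $h(u^2)/(1-u^2)$ tends to $h(1)$ (the $\Omega$-analogue of the statement closing the proof of Theorem \ref{relate}, now without the factor $(1+u^2)/2$, since the $\Omega^{\pm}$-average is already correctly normalized by $|O^{\pm}_{2n}(q)|=2|\Omega^{\pm}_{2n}(q)|$) yields the limiting law $R^e_{(1,q)}$; part 4 is identical with $h$ replaced by its odd-parts counterpart carrying the extra $1/u^2$. I expect the only delicate point to be the bookkeeping of these normalizing constants---in particular tracing how the shifted weight $(1-u^2)u^{2(n-1)}$ and the vanishing of the coset at $n=0$ conspire to produce the $1/u^2$ prefactor of $R^o_{(u,q)}$---but no idea beyond those already present in Theorem \ref{relate} is needed.
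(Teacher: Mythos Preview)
Your proposal is correct and is exactly the approach the paper intends: the paper itself omits the proof, stating only that it is ``almost identical to that of Theorem \ref{relate},'' and your plan---specializing the $\Omega$-version of the cycle index from the Remark after Theorem \ref{cycindex}, collapsing the $\phi\neq z-1$ factors via Lemma 1.3.17 of \cite{FNP}, and tracking the normalizations $|O^{\pm}_{2n}(q)|=2|\Omega^{\pm}_{2n}(q)|$ and the $u^{-2}$ shift for the coset---does precisely this. The only cosmetic difference is that for parts 2 and 4 you drop the substitution $x_{z-1,\lambda}\mapsto x_{z-1,\lambda}u^{|\lambda|}$ before applying the Tauberian step, whereas the paper's proof of Theorem \ref{relate} keeps it; either way yields the same limit.
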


Finally, we describe an algorithm for sampling from $R^e_{(u,q)}$ and $R^o_{(u,q)}$, which is proved along the same lines as Theorem \ref{samp}.

\begin{theorem}
\begin{enumerate}
\item Let $\lambda_1'$ be a random even natural number which is equal to $2k$ with probability \[ \prod_{i=1}^{\infty} (1-u^2/q^{2i-1}) \frac{u^{2k}
}{q^{2k^2-k}(1-u^2/q)(1-1/q^2) \cdots (1-u^2/q^{2k-1})(1-1/q^{2k})}. \] Define $\lambda_2',\lambda_3',\cdots$ according to the rules
that if $\lambda_i'=a$, then $\lambda_{i+1}'=b$ with probability $K_1(a,b)$ if
$i$ is odd and probability $K_2(a,b)$ if $i$ is even. Then the resulting
partition is distributed according to $R^e_{(u,q)}$.

\item Let $\lambda_1'$ be a random odd natural number which is equal to $2k+1$ with probability \[  \prod_{i=1}^{\infty} (1-u^2/q^{2i-1}) \frac{u^{2
k}}{q^{2k^2+k}(1-u^2/q)(1-1/q^2) \cdots (1-1/q^{2k})(1-u^2/q^{2k+1})}\] Define $\lambda_2',\lambda_3',\cdots$ according to the rules
that if $\lambda_i'=a$, then $\lambda_{i+1}'=b$ with probability $K_1(a,b)$ if
$i$ is odd and probability $K_2(a,b)$ if $i$ is even. Then the resulting
partition is distributed according to $R^o_{(u,q)}$.
\end{enumerate}
\end{theorem}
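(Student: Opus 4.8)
The plan is to imitate the proof of Theorem \ref{samp} essentially verbatim, the only genuinely new feature being a parity restriction on the number of parts. The first step is to record, by comparing the defining formulas, the identities
\[
R^e_{(u,q)}(\lambda) = (1+u^2)\,R_{(u,q)}(\lambda) = q^{l(\lambda)}\,P_{Sp,u}(\lambda) \quad (l(\lambda)\ \text{even}),
\]
\[
R^o_{(u,q)}(\lambda) = \frac{1+u^2}{u^2}\,R_{(u,q)}(\lambda) = \frac{q^{l(\lambda)}}{u^2}\,P_{Sp,u}(\lambda) \quad (l(\lambda)\ \text{odd}).
\]
These are immediate from the relation $R_{(u,q)}(\lambda)=\frac{q^{l(\lambda)}}{1+u^2}P_{Sp,u}(\lambda)$ established inside the proof of Theorem \ref{samp}, together with the observation that the prefactors in the definitions of $R^e_{(u,q)}$ and $R^o_{(u,q)}$ differ from that of $R_{(u,q)}$ precisely by the factors $1+u^2$ and $(1+u^2)/u^2$. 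In words: $R^e_{(u,q)}$ is $R_{(u,q)}$ \emph{conditioned} on $l(\lambda)$ being even, and $R^o_{(u,q)}$ is $R_{(u,q)}$ conditioned on $l(\lambda)$ being odd, the constants $1+u^2$ and $(1+u^2)/u^2$ being forced by the fact that $R_{(u,q)}$ assigns total mass $1/(1+u^2)$ to even-length partitions and $u^2/(1+u^2)$ to odd-length partitions.

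Next I would exploit the key structural point that, in the transpose parametrization underlying Theorem \ref{samp}, the number of parts equals the first coordinate of the chain, $l(\lambda)=\lambda_1'$. Hence conditioning on the parity of $l(\lambda)$ is the same as conditioning on the parity of $\lambda_1'$, and this is a function of the \emph{initial} coordinate only. Since the transition kernels $K_1$ and $K_2$ driving $\lambda_2',\lambda_3',\ldots$ do not involve this parity, such conditioning leaves the conditional law of $(\lambda_2',\lambda_3',\ldots)$ given $\lambda_1'$ completely unchanged and alters only the distribution of $\lambda_1'$. Therefore the sampling procedure for $R^e_{(u,q)}$ (resp.\ $R^o_{(u,q)}$) is obtained from the one in Theorem \ref{samp} simply by restricting the starting variable $\lambda_1'$ to even (resp.\ odd) values and renormalizing, while retaining the \emph{same} update rules $K_1,K_2$; this is exactly the procedure asserted in parts (1) and (2).

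It then remains to check that the two initial laws displayed in the statement really are these renormalized restrictions. In Theorem \ref{samp} the value $2k$ occurs with probability $\frac{\prod_{i\ge1}(1-u^2/q^{2i-1})}{1+u^2}P'_{O,u}(2k)$ and $2k+1$ with probability $\frac{\prod_{i\ge1}(1-u^2/q^{2i-1})}{1+u^2}P'_{O,u}(2k+1)$; multiplying the even case by $1+u^2$ and the odd case by $(1+u^2)/u^2$ and using the cancellation $u^{2k+2}\cdot u^{-2}=u^{2k}$ in the odd formula reproduces precisely the two initial distributions in the statement. I expect the only place demanding real care to be this final bookkeeping of prefactors, namely confirming that the renormalized even and odd initial laws each sum to $1$, which is equivalent to the even/odd split $R_{(u,q)}(l(\lambda)\ \text{even})=1/(1+u^2)$ noted above; this split can be read off from the grand-canonical relation of Theorem \ref{relate1}, which exhibits $R^e_{(u,q)}$ and $R^o_{(u,q)}$ as honest probability distributions coming from $\Omega^{\pm}_{2n}(q)$ and its nontrivial coset. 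Once this is in place, the conclusion follows from Theorems 3 and 4 of \cite{F2} exactly as in Theorem \ref{samp}.
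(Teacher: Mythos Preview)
Your proposal is correct and follows essentially the same route as the paper, which simply remarks that the result ``is proved along the same lines as Theorem \ref{samp}'': you relate $R^{e}_{(u,q)}$ and $R^{o}_{(u,q)}$ to $R_{(u,q)}$ (hence to $P_{Sp,u}$) via the prefactor identities, and then invoke Theorems 3 and 4 of \cite{F2}. The conditioning-on-$\lambda_1'$ viewpoint you add is a clean way to package the argument, but it does not depart from the paper's approach.
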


\end{document}